\newtheorem{proposition}{Proposition}
\newtheorem{lemma}[proposition]{Lemma}
\newtheorem{theorem}[proposition]{Theorem}
\newcommand{\low}[1]{{\underline{#1}}}
\newcommand{\up}[1]{{\overline{#1}}}
\def\lpr{\underline P}
\def\upr{\overline P}
\newcommand{\RR}{\mathds{R}}
\newcommand{\marphi}{\varphi}
\newcommand{\marpsi}{\psi}
\newcommand{\marcop}{C^{\mathrm{M}}}
\newcommand{\marsetcop}{\mathcal C^{\mathrm{M}}}
\newcommand{\mmphi}{\varphi}
\newcommand{\mmpsi}{\chi}
\newcommand{\mmcop}{C^{\mathrm{MM}}}
\newcommand{\mmsetcop}{\mathcal C^{\mathrm{MM}}}
\newcommand{\rmmcop}{C^{\mathrm{RMM}}}
\newcommand{\rmmsetcop}{\mathcal C^{\mathrm{RMM}}}
\begin{document}

\title{Some multivariate imprecise shock model copulas}

\author[D.\ Dol\v{z}an]{David Dol\v{z}an}
\address{David Dol\v{z}an, Faculty of Mathematics and Physics, University of Ljubljana, and Institute of Mathematics, Physics and Mechanics, Ljubljana, Slovenia}
\email{david.dolzan@fmf.uni-lj.si}

\author[D.\ Kokol Bukov\v{s}ek]{Damjana Kokol Bukov\v{s}ek}
\address{Damjana Kokol Bukov\v{s}ek, School of Economics and Business, University of Ljubljana, and Institute of Mathematics, Physics and Mechanics, Ljubljana, Slovenia}
\email{damjana.kokol.bukovsek@ef.uni-lj.si}

\author[M.\ Omladi\v{c}]{Matja\v{z} Omladi\v{c}}
\address{Matja\v{z} Omladi\v{c}, Institute of Mathematics, Physics and Mechanics, Ljubljana, Slovenia}
\email{matjaz@omladic.net}

\author[D.\ \v{S}kulj]{Damjan \v{S}kulj}
\address{Damjan \v{S}kulj, Faculty of Social Sciences, University of Ljubljana, Slovenia}
\email{damjan.skulj@fdv.uni-lj.si}

\thanks{Damjan \v{S}kulj acknowledges the financial support from the Slovenian Research Agency (research core funding No. P5-0168).
David Dol\v{z}an, Damjana Kokol Bukov\v{s}ek, and Matja\v{z} Omladi\v{c} acknowledge financial support from the Slovenian Research Agency (research core funding No. P1-0222).}
\subjclass[2010]{Primary: 62H05, 60A86, Secondary: 62H86.}%
\keywords{imprecise probability; shock model; Marshall's copula; maxmin copula; reflected maxmin copula. }%

\maketitle

\begin{center}
\textbf{Abstract}
\end{center}

\noindent {\small  Bivariate imprecise copulas have recently attracted substantial attention
. However, the multivariate case seems still to be a ``blank slate''. It is then natural that this idea be tested first on shock model induced copulas, a family which might be the most useful in various applications. We investigate a model in which some of the shocks are assumed imprecise and develop the corresponding set of copulas. In the Marshall's case we get a coherent set of distributions and a coherent set of copulas, where the bounds are naturally corresponding to each other. The situation with the other two groups of multivariate imprecise shock model induced copulas, i.e., the maxmin and the the reflected maxmin (RMM) copulas, is substantially more involved, but we are still able to produce their properties. These are the main results of the paper that serves as the first step into a theory that should develop in this direction. In addition, we unfold the theory of bivariate imprecise RMM copulas that has not yet been done before.}

\section{Introduction}\label{intro}

Copulas arising from shock models in the presence of probabilistic uncertainty, which means that probability distributions are not necessarily precisely known, have been proposed for the first time by Omladi\v{c} and \v{S}kulj \cite{OmSk} in bivariate setting. The main purpose of this paper is to present some extensions of the results presented there including an expansion of these notions to the multivariate case.

Copulas have been introduced in the precise setting by A.~Sklar \cite{Skla}, who considered copulas as functions $C(\mathbf{u})=C(u_1,u_2,\ldots,u_n)$ satisfying certain conditions. They can be defined equivalently as joint distribution functions of random vectors with uniform marginal distributions.
He proved a two-way theorem: firstly, given a random vector $\mathbf{X}=(X_1,X_2, \ldots,X_n)$ with a vector of marginal probability distributions $\mathbf{F}= (F_1,F_2, \ldots,F_n)$ and a copula $C$, the function $C(\mathbf{F}(\mathbf{x})) = C(F_1(x_1),F_2(x_2),\ldots ,F_n(x_n))$ is a joint distribution of the random vector $\mathbf{X}$ having distributions $\mathbf{F}$ as its marginals.
Secondly, given a random vector $\mathbf{X}$ with joint distribution $H(\mathbf{x})$ there exists a copula $C(\mathbf{u})$ such that $H(\mathbf{x})= C(\mathbf{F}(\mathbf{x}))$, where $\mathbf{F}$ is the vector of the marginal distribution functions of the respective random variables $\mathbf{X}$. Since then, copula models have become popular in various applications in view of their ability to describe the relationships among random variables in a flexible way and several families of copulas have been introduced to this end, motivated by specific needs from the scientific practice (cf.~\cite{Joe,Nels,DuSe}).

Among the first widely studied and applied families of copulas were the ones arising in shock models. They appear naturally as models of joint distributions for random variables representing lifetimes of components affected by shocks. Two types of shocks are usually considered in these models, the first type only affects each one of the components separately (the idiosyncratic shocks), while the second one simultaneously affects all the components (the exogenous shock).
In the original Marshall's case (cf.~\cite{Mars} based on an earlier work of Marshall and Olkin \cite{MaOl}) both types of shocks cause the component to cease to work immediately. Recently a new family of shock induced copulas has been proposed by Omladi\v{c} and Ru\v{z}i\'{c} \cite{OmRu} where the exogenous, i.e., systemic, shock has a detrimental effect on some of the components and a beneficial effect on the other ones. A third type of shock model induced copulas was introduced by Ko\v{s}ir and Omladi\v{c} \cite{KoOm}, the reflected maxmin copulas, RMM for short. Actually, the two papers introduce the bivariate version for independent shocks, while an extension to somewhat more general multivariate setting is presented in papers \cite{DuOmOrRu,KoBuKoMoOm2}.

Quantitative modeling of uncertainty is traditionally based on the use of precise probabilities: for each event $A$, a single probability $P(A)$ is assigned, universally accepted to satisfy Kolmogorov's axioms. There have been many successful applications of this concept, but also some criticism. The requirement that $P$ be $\sigma$-additive should be replaced, as some believe, by a more realistic requirement that it be additive.
A more flexible theory of uncertainty that has evolved is the concept of imprecise probabilities. For an event $A$, the lower probability $\underline{P}(A)$ can informally be interpreted as reflecting the evidence certainly in favour of the event $A$, while the upper probability $\overline{P}(A)$ reflects all evidence possibly in favour of $A$.
So, the imprecise probability of $A$ may be seen as the set of values lying between the two extremes. A comprehensive study of this notion started by Walley \cite{Wall}, while more recent development in the area can be found in \cite{AuCoCoTr}. It is natural to assume probabilities in these considerations to be finitely additive and not necessarily $\sigma$-additive. The imprecise distribution of a random variable then consists of the interval of all distributions between a lower bound $\underline{F}$ and an upper bound $\overline{F}$; this set is called a \emph{probability box}, a \emph{$p$-box} for short \cite{FeKrGiMySe,TrDe}.

One can find numerous arguments for imprecision, such as scarcity of available information, costs connected to acquiring precise inputs or even inherent uncertainty related to phenomena under consideration. Ignoring imprecision may lead to deceptive conclusions and consequentially to harmful decisions, especially if the conclusions are backed by seemingly precise outputs. Methods of imprecise probabilities have been applied to various areas of probabilistic modeling, such as stochastic processes \cite{DeCoHeQu, skul}, game theory \cite{MiMo, Nau2011}, reliability theory \cite{Cool, ObKiSc, UtCo, YuDeSaSc}, decision theory \cite{JaScAu, MoMiMo, Troffaes2007}, financial risk theory \cite{PeVi, Vicig2008}, and others. Perhaps the first application of the theory of copulas to models of imprecise probabilities has been proposed by Schmelzer~\cite{Schmelzer2015,Schmelzer2015b, Schmelzer2018}.

A possible definition of an \emph{imprecise bivariate $p$-box} was given in Pelessoni et al.~\cite{PeViMoMi2} thus raising the question of the corresponding Sklar type theorem.
The first move in this direction was made by Montes et al.~\cite{MoMiPeVi} proving one half of the imprecise Sklar's theorem using the definition of bivariate $p$-box introduced in \cite{PeViMoMi2}. The same authors introduce in an earlier paper \cite{PeViMoMi1} an \emph{imprecise copula} as an interval of quasi-copulas satisfying certain axioms.
The four authors propose a coherence question in these papers that is answered in the negative by Omladi\v{c} and Stopar \cite{OmSt1}; continuing their work in \cite{OmSt2}, the same authors give a full scale Sklar's theorem in the bivariate imprecise setting using a slightly different notion than the bivariate $p$-box of \cite{PeViMoMi2}, i.e., what they call a \emph{restricted bivariate $p$-box}. Perhaps an even more important result there is \cite[Theorem 4]{OmSt2} saying that if a joint distribution function emerges on a finitely additive probability space, the resulting copula exists and may be chosen so that it satisfies the usual Sklar's axioms. So, all the possible problems that may arise from relaxing the Kolmogorov's $\sigma$-additivity axiom, stay exclusively in the univariate marginal distributions.

As explained earlier, the main contribution of this paper is on the multivariate level, where we extend to the imprecise setting all the three types of shock model induced copulas: Marshall's, maxmin and RMM. For the bivariate case the Marshall's and the maxmin copulas have been first presented in \cite{OmSk}, while for the RMM copulas this has not been done yet, so we have to do it first. The paper is organized as follows.
In Section 2 we revisit some information on the imprecise distributions and $p$-boxes and in Section 3 we present some details on copulas and shock model induced copulas. Section 4 brings facts on the two known families of bivariate shock model induced copulas, the Marshall's and the maxmin ones.
Section 5 unfolds our first main result -- the imprecise version of the bivariate reflected maxmin copulas. In Sections 6, 7, and 8 we give our multivariate extensions of the imprecise shock model based copulas, namely the Marshall's, the maxmin, respectively the RMM copulas.

\section{Theory of imprecise probabilities revisited}\label{sec:impre}

\subsection{Coherent lower and upper probabilities.}\label{subsec:prob} We first introduce briefly the basic concepts and ideas of imprecise probability models. For a detailed treatment, the reader is referred to \cite{AuCoCoTr, Wall}.
Let $\Omega$ be a possibility space, and $\mathcal A$ a collection of its subsets, called \emph{events}.
Usually we assume $\mathcal A$ to be an algebra, but not necessarily a $\sigma$-algebra.

The concept of precise probability on the measurable space $(\Omega, \mathcal A)$ can be generalised by allowing probabilities of events in $\mathcal A$ to be given in terms of intervals $[\low P(A), \up P(A)]$ rather than precise values.
The functions $\low P\leqslant \up P$ are mapping events to their lower and upper probability bounds and are respectively called \emph{lower} and \emph{upper probabilities}.
If $\mathcal A$ is an algebra
, then the following conjugacy relation between lower and upper probabilities is usually required:
\begin{equation}\label{eq-conjugacy}
\upr (A) = 1-\lpr(A^c) ~\text{for every } A\in \mathcal A.
\end{equation}
To every pair of lower and upper probabilities $\lpr$ and $\upr$ we can also associate the set
\begin{equation*}
\mathcal M = \{ P \colon P \text{ is a finitely additive probability on } \mathcal A, \lpr \leqslant P \leqslant \upr \}.
\end{equation*}
It is clear from the above that the set $\mathcal M$ is non-empty only if $\lpr \leqslant \upr$. 

Another central question regarding a pair of lower and upper probabilities is whether the bounds are pointwise limits of the elements in $\mathcal M$:
\begin{equation*}
\lpr (A) = \inf_{P\in\mathcal M} P(A), \qquad \upr (A) = \sup_{P\in\mathcal M} P(A) \qquad \text{ for every} A\in \mathcal A.
\end{equation*}
If the above conditions are satisfied, $\lpr$ and $\upr$ are said to be \emph{coherent} lower and upper probabilities respectively.
In the case of coherence, the conjugacy Condition \eqref{eq-conjugacy} is automatically fulfilled, which means in particular that if a lower probability $\lpr$ is coherent, then it uniquely determines the corresponding upper probability.
A simple characterization of coherence in terms of the properties of $\lpr$ and $\upr$ does not seem to be known in the literature. 


Instead of the full structure of probability spaces, we are often concerned only with the distribution functions of specific random variables. The set of relevant events where the probabilities have to be given then shrinks considerably. In the precise case, a single distribution function $F$ describes the distribution of a random variable $X$, which gives the probabilities of the events of the form $\{ X\leqslant x\}$. Thus $F(x) = P(X\leqslant x)$ for $x\in\overline{\mathds{R}}$ where $\overline{\mathds{R}} =\mathds{R}\cup\{-\infty,+\infty\}$. Sometimes we will also consider the corresponding \emph{survival function}, which we will denote by $\widehat F(x) = 1-F(x) = P(X > x)$, which is decreasing and positive. (In the copula theory literature it is more usual to denote the survival function of $F$ by $\overline{F}$, but we will reserve this notation for a different meaning.)  Notice that the same operator $\widehat{\cdot}$ sends a survival function back to its distribution function. Observe also that in the standard probability theory distribution functions are cadlag, i.e., continuous from the right, and survival functions are caglad, i.e., continuous from the left, while in the finitely additive approach the only property a distribution function has is monotone increasing, and survival function is only monotone decreasing.

\subsection{Bivariate $p$-boxes.} In the imprecise case, the probabilities of the above form are replaced by the corresponding lower (and upper) probabilities, resulting in sets of distribution functions called $p$-boxes \cite{FeKrGiMySe,TrDe}. A \emph{$p$-box} is a pair $(\low F, \up F)$ of distribution functions with $\low F\leqslant \up F$, where $\low F(x) = \lpr(X\leqslant x)$ and $\up F(x) = \upr(X\leqslant x)$.  To every $p$-box we associate the set of all  distribution functions with the values between the bounds:
\begin{equation*}
\mathcal{F}_{(\low F, \up F)} = \{ F \colon F \text{ is a distribution function}, \low F \leqslant F \leqslant \up F \}.
\end{equation*}
Clearly, $\mathcal F$ is a convex set of distribution functions. Conversely, since supremum and infimum of any set of distribution functions are themselves distribution functions, every set of distribution functions generates a $p$-box containing the original set.

In the theory of imprecise probabilities, precise probability denotes a probability measure that is finitely additive, and not necessarily $\sigma$-additive, as is the case in most models using classical probabilities. 
The general theory of bivariate $p$-boxes is relatively new \cite{MoMiPeVi, PeViMoMi2}. A mapping $F\colon \overline{\RR}\times \overline{\RR} \to [0, 1]$ is called \emph{standardized} if
	\begin{enumerate}[(i)]
		\item \emph{it is componentwise increasing: } $F(x_1, y) \leqslant F(x_2, y)$ and $F(x, y_1) \leqslant F(x, y_2)$ whenever $x_1\leqslant x_2$ and $y_1\leqslant y_2$ and for all $x, y\in \overline{\RR}$;
		\item $F(-\infty, y) = F(x, -\infty) = 0$ for every $x, y\in \overline{\RR}$;
		\item $F(\infty, \infty) = 1$.
	\end{enumerate}	
If in addition,
	\begin{enumerate}[(iv)]
	\item 	$F(x_2, y_2)-F(x_1, y_2)-F(x_2, y_1)+F(x_1, y_1) \geqslant 0$ for every $x_1\leqslant x_2$ and $y_1\leqslant y_2$,
	\end{enumerate}	
then it is called a \emph{bivariate distribution function}.

A pair $(\low F, \up F)$ of standardized functions, where $\low F\leqslant \up F$, is called a \emph{bivariate $p$-box}.

Observe that (1) neither the infimum nor supremum of a set of bivariate distribution functions need to be a bivariate distribution function; (2) the set
\[
	\mathcal{F}_{(\low F, \up F)} = \{ F\colon \overline{\RR} \times \overline{\RR} \to [0, 1], F \text{ is a bivariate distribution function}, \low F \leqslant F \leqslant \up F \}
\]
may be empty in general. If it is not empty and its pointwise infimum and supremum equals $\low F$ and $\up F$ respectively, then this bivariate $p$-box is said to be \emph{coherent}.

\subsection{Independent random variables}
In the case where probability distributions are known imprecisely, several distinct concepts of independence exist, such as \emph{epistemic irrelevance, epistemic independence} and \emph{strong independence} (see e.g. \cite{Couso2000,Couso2010}). However, as long as $p$-boxes are concerned, all these notions result in the \emph{factorization property}, 
(cf.\ \cite{MoMiPeVi}):
\noindent let a pair of $p$-boxes $(\low F_X, \up F_X)$ and $(\low F_Y, \up F_Y)$ correspond to the distributions of random variables $X$ and $Y$. The bivariate $p$-box $(\low F, \up F)$ is \emph{factorizing} if
	\begin{align*}
	\low F(x, y) & = \low F_X(x) \low F_Y(y), \label{eq-factorizing-p-box-l}\\
	\up F(x, y) & = \up F_X(x) \up F_Y(y).
	\end{align*}
Thus a bivariate $p$-box corresponding to the bivariate distribution of a pair of independent random variables is factorizing, regardless of the type of independence.

\section{Copulas and Shock model copulas revisited}\label{sec:cop}

\subsection{Copulas}\label{subsec:cop}

Copulas present a very convenient tool for modeling dependence of random variables free of their marginal distributions -- only when one inserts these distributions into a copula, it becomes a joint distribution. 	A function $C\colon [0, 1]\times [0, 1]\to [0, 1]$ is called a (bivariate) \emph{copula} if it satisfies the following conditions:\
	\begin{enumerate}[(C1)]
		\item $C(u, 0) = C(0, v) = 0$ for every $u, v \in [0, 1]$;
		\item $C(u, 1) = u$ and $C(1, v) = v$ for every $u, v \in [0, 1]$;
		\item $C(u_2, v_2)-C(u_1, v_2)-C(u_2, v_1)+C(u_1, v_1) \geqslant 0$ for every $0\leqslant u_1\leqslant u_2\leqslant 1$ and $0\leqslant v_1\leqslant v_2\leqslant 1$.
	\end{enumerate}
This definition extends easily to the multivariate situation and the following theorem can also be stated in that generality. We give here only the bivariate case for the sake of better intuitive appeal.

\begin{theorem}[Sklar's theorem\cite{Skla}]
	Let $F\colon \overline{\RR}\times \overline{\RR}\to [0, 1]$ be a bivariate distribution function with marginals $F_X$ and $F_Y$. Then there exists a copula $C$ such that
	\begin{equation}\label{eq-copula-sklar}
	F(x, y) = C(F_X(x), F_Y(y)) \text{ for all } (x,y)\in\overline{\mathbb{R}}\times\overline{\mathbb{R}};
	\end{equation}
	and conversely, given any copula $C$ and a pair of distribution functions $F_X$ and $F_Y$, equation \eqref{eq-copula-sklar} defines a bivariate distribution function.
\end{theorem}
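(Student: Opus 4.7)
The plan is to split the theorem into its two directions, handling the converse by direct verification and the forward direction by first reducing to the continuous case via quasi-inverses and then extending via a subcopula argument. \textbf{The converse direction} is essentially a bookkeeping exercise: given a copula $C$ and univariate distribution functions $F_X, F_Y$, set $H(x,y) := C(F_X(x), F_Y(y))$ and check the four standardization/2-increasing axioms. Componentwise monotonicity of $H$ follows from monotonicity of $C$ in each argument (a standard consequence of (C1)--(C3)) composed with monotonicity of the marginals. The boundary conditions $H(-\infty, y) = H(x, -\infty) = 0$ and $H(\infty,\infty) = 1$ drop out of (C1) and (C2) combined with $F_X(\pm\infty) = F_Y(\pm\infty) \in \{0,1\}$. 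The 2-increasing condition (iv) is the pullback of (C3) under the substitution $u_i = F_X(x_i)$, $v_j = F_Y(y_j)$, which preserves the required orderings since the $F$'s are increasing.

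For \textbf{the forward direction}, I would first treat the case where $F_X$ and $F_Y$ are continuous. Define the quasi-inverses $F_X^{(-1)}(u) := \inf\{ x \in \overline{\RR} : F_X(x) \geqslant u \}$ and $F_Y^{(-1)}$ analogously. Continuity gives $F_X(F_X^{(-1)}(u)) = u$ for every $u \in [0,1]$, so $C(u,v) := F(F_X^{(-1)}(u), F_Y^{(-1)}(v))$ is well defined on the full unit square. Conditions (C1) and (C2) reduce to boundary values of $F$, and (C3) is inherited from the 2-increasing property (iv) of $F$. The identity $F(x,y) = C(F_X(x), F_Y(y))$ then follows from the fact that $F$ is constant on level sets of each marginal, so the specific choice of pre-image in $F^{(-1)}$ is immaterial.

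\textbf{The general case}, where $F_X$ or $F_Y$ has jumps, is the main obstacle and is where the classical argument does real work. The construction above only defines $C$ on $\mathrm{Ran}(F_X)\times \mathrm{Ran}(F_Y) \subseteq [0,1]^2$, yielding a \emph{subcopula} $C'$: a function on a product of closed subsets of $[0,1]$ (each containing $0$ and $1$) satisfying the copula axioms on its domain. One then extends $C'$ to a full copula on $[0,1]^2$ by bilinear interpolation across the gaps left by the jumps of the marginals: on each rectangle whose corners lie in $\mathrm{Ran}(F_X)\times \mathrm{Ran}(F_Y)$ but whose interior does not, extend linearly in each variable. The delicate verification is that this extension still satisfies the 2-increasing inequality (C3) globally; this reduces to the 2-increasing inequality on corner rectangles of the subcopula together with the fact that bilinear interpolation transmits the sign of the $C'$-volume to every sub-rectangle. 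This extension lemma (Schweizer--Sklar) is the only nontrivial ingredient, and once it is available the equation $F = C\circ (F_X, F_Y)$ is immediate by construction on the range-product and by consistency of $F$ with its jumps elsewhere.
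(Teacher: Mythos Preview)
The paper does not prove Sklar's theorem at all; it is stated as a classical background result with a citation to \cite{Skla} and no proof is given. Your proposal is the standard textbook argument (direct verification for the converse, quasi-inverses for continuous marginals, and the Schweizer--Sklar subcopula extension for the general case) and is correct in outline, so there is nothing to compare against.
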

It is our goal to show how some important classes of copulas can be extended to the case of imprecise probability models. Our construction will spread to the general multivariate case.

\subsection{Marshall's copulas revisited}\label{subsec:marsall} Copulas of the form
\begin{equation*}
\marcop_{\marphi, \marpsi}(u, v) = \begin{cases} \displaystyle
uv\min\left\{ \frac{\marphi(u)}{u}, \frac{\marpsi(v)}{v} \right\} & \text{if } uv > 0; \\
0 & \text{if } uv = 0,
\end{cases}
\end{equation*}
where
\begin{enumerate}[(P1)]
	\item $\marphi$ and $\marpsi$ are two increasing real valued maps on $[0, 1]$;
	\item $\marphi(0) = \marpsi(0) = 0$ and $\marphi(1) = \marpsi(1) = 1$; 	
	\item $\marphi^*(u) = \dfrac{\marphi(u)}{u}\colon (0, 1]\to [1, \infty]$ and $\marpsi^*(v) = \dfrac{\marpsi(v)}{v}\colon (0, 1]\to [1, \infty]$ are decreasing,
\end{enumerate}
were first introduced in \cite{Mars} and are called (bivariate) \emph{Marshall's copulas}. They were historically the first shock model induced copulas.
There is an alternative way for writing down this definition which is better for generalizing it to more than 2 dimensions, i.e.,
\[
    \marcop_{\marphi, \marpsi}(u, v) = \begin{cases} \displaystyle
\marphi(u)\marpsi(v)\min\left\{ \frac{u}{\marphi(u)}, \frac{v}{\marpsi(v)} \right\} & \text{if } \marphi(u)\marpsi(v) > 0; \\
0 & \mbox{otherwise}.
\end{cases}
\]
Observe that this definition is equivalent to the previous one as may be seen via a straightforward consideration.
Here is the stochastic interpretation of these copulas and their generating functions $\marphi$ and $\marpsi$ emerging from \cite{Mars}.
\begin{proposition}\label{prop-marshall-properties}
	Let $X, Y, Z$ be independent random variables with corresponding distribution functions $F_X, F_Y$ and $F_Z$. Define $U= \max\{ X, Z\}$ and $V=\max\{ Y, Z\}$ and let $F_U$ and $F_V$ denote their respective distribution functions. Furthermore, let $H$ be the bivariate joint distribution function of the pair $(U, V)$. Then:
	\begin{enumerate}[(i)]
		\item $F_U = F_XF_Z$ and $F_V = F_YF_Z$.
		\item A pair of functions $\marphi$ and $\marpsi$ satisfying (P1)--(P3) exists, so that $F_X(x) = \marphi(F_U(x))$ for all $x$, where $F_U(x)>0$, and $F_Y(y) = \marpsi(F_V(y))$ for all $y$, where $F_V(y)>0$.
		\item $H(x, y) = \marcop_{{\marphi}, {\marpsi}}(F_U(x), F_V(y))$.
		\item $\marphi^*\circ F_U = \marpsi^*\circ F_V$.
		\item $F_Z = \dfrac{F_U}{F_X} = \dfrac{F_U}{\marphi(F_U)} = \dfrac{F_V}{F_Y} = \dfrac{F_V}{\marpsi(F_V)}$, where the expressions are defined.
	\end{enumerate}
\end{proposition}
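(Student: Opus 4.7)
The plan is to compute each of the five items directly from the independence of $X$, $Y$, $Z$ and the definitions $U=\max\{X,Z\}$, $V=\max\{Y,Z\}$, and then construct $\marphi,\marpsi$ so that (P1)--(P3) fall out automatically. Item (i) is immediate: independence gives $F_U(x)=P(X\le x,Z\le x)=F_X(x)F_Z(x)$, and similarly $F_V=F_YF_Z$. Item (v) is then a rearrangement of (i), once (ii) produces $\marphi,\marpsi$ matching $F_X=\marphi\circ F_U$ and $F_Y=\marpsi\circ F_V$.

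For (ii), the idea is to define $\marphi$ first only on the range of $F_U$ by setting $\marphi(F_U(x))=F_X(x)$. Well-definedness on this range is the first check: if $F_U(x_1)=F_U(x_2)$ with $x_1<x_2$, then because $F_X$ and $F_Z$ are both nondecreasing and their product is constant on $[x_1,x_2]$, each must be constant there, so $F_X(x_1)=F_X(x_2)$. With $\marphi$ well defined on the range, observe that on this range
\[
\marphi^*(F_U(x))=\frac{\marphi(F_U(x))}{F_U(x)}=\frac{F_X(x)}{F_X(x)F_Z(x)}=\frac{1}{F_Z(x)},
\]
which is decreasing in $x$; since $F_U$ is nondecreasing, $\marphi^*$ is a decreasing function of its argument on the range. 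To finish (ii) I extend $\marphi^*$ from the range of $F_U$ to all of $(0,1]$ as a decreasing function with $\marphi^*(1)=1$ (e.g.\ by linear interpolation across gaps, and constant continuation at the left endpoint), and set $\marphi(u)=u\marphi^*(u)$, $\marphi(0)=0$. The boundary values $\marphi(1)=1$ and $\marphi(0)=0$ come from $F_X(\infty)=F_Z(\infty)=1$ and the convention at $0$; monotonicity of $\marphi$ itself follows because $\marphi^*\le\marphi^*(u_{0})$ on $[u_{0},1]$ gives a mild bookkeeping argument using the product form. The function $\marpsi$ is built in exactly the same way from $F_V$ and $F_Y$.

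For (iii), I would compute directly: $H(x,y)=P(X\le x,\,Y\le y,\,Z\le\min\{x,y\})=F_X(x)F_Y(y)F_Z(\min\{x,y\})$ by independence. Then, using the identifications from (ii) and (v), $F_X(x)=\marphi(F_U(x))$, $F_Y(y)=\marpsi(F_V(y))$, $F_Z(x)=F_U(x)/\marphi(F_U(x))$ and $F_Z(y)=F_V(y)/\marpsi(F_V(y))$, together with the monotonicity $F_Z(\min\{x,y\})=\min\{F_Z(x),F_Z(y)\}$, the right side becomes exactly the second (equivalent) formula for $\marcop_{\marphi,\marpsi}(F_U(x),F_V(y))$. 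For (iv) note that both $\marphi^*(F_U(x))$ and $\marpsi^*(F_V(x))$ equal $1/F_Z(x)$ by the computation above, so the two compositions agree as functions of the common argument.

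The main obstacle is step (ii): one must check that the auxiliary function $\marphi^*$ is genuinely decreasing where it is first defined (on the image of $F_U$) and can be extended to a decreasing function on all of $(0,1]$ so that the resulting $\marphi(u)=u\marphi^*(u)$ is itself nondecreasing with $\marphi(0)=0,\marphi(1)=1$. Everything else is essentially bookkeeping, relying on independence and monotonicity of $F_Z$.
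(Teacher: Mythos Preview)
The paper does not give its own proof of this proposition; it is stated as a recollection of Marshall's classical result (cited as \cite{Mars}), so there is no proof in the paper to compare against. Your overall strategy---compute $H$ directly from independence, read off $\marphi^*(F_U)=\marpsi^*(F_V)=1/F_Z$ on the relevant ranges, and then extend---is exactly the natural one and matches how the paper uses these identities elsewhere (e.g.\ in Theorem~\ref{main:marshall}(v) and in the multivariate arguments).

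There is, however, a concrete problem in the one step you yourself flagged as the obstacle. Linear interpolation of $\marphi^*$ across a gap $[a,b]$ in the range of $F_U$ does \emph{not} in general make $\marphi(u)=u\,\marphi^*(u)$ nondecreasing: with $\marphi^*$ affine, $\marphi$ is quadratic and can overshoot and then decrease before reaching $b$. For instance, if $a=0.1$, $b=0.5$, $\marphi^*(a)=2$, $\marphi^*(b)=1$ (so $\marphi(a)=0.2$, $\marphi(b)=0.5$, perfectly consistent with monotone $F_X$), linear interpolation gives $\marphi(u)=2.25u-2.5u^2$, which increases up to $u=0.45$ and then decreases on $[0.45,0.5]$. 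So (P1) fails with this particular extension.

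A correct extension has to balance the monotonicity of $\marphi$ against the antitonicity of $\marphi^*$ across jumps of $F_X$ and $F_Z$. The paper in fact supplies exactly such an extension later, in equation~\eqref{eq-phi-ext}: on a gap induced by a jump at $x_0$ it keeps $\marphi$ constant on the two outer subintervals (absorbing the jump of $F_X$) and sets $\marphi(u)=u/F_Z(x_0)$ on the middle subinterval (so $\marphi^*$ is constant there). That piecewise construction verifies (P1)--(P3) directly and, as the paper needs for the imprecise theory, is also order-preserving in $F_X$ (Lemma~\ref{lem-phi-order}). Plugging that extension into your argument closes the gap; the rest of your proof of (i), (iii), (iv), (v) is correct as written.
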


It turns out that the generating functions are necessarily continuous on the interval $(0,1]$ (cf.\ \cite{OmRu}); however, they are not uniquely determined with the condition \emph{(ii)}.

These copulas were extended to the imprecise setting in \cite{OmSk} for the bivariate case and we will extend this further to the multivariate case.

\subsection{Maxmin copulas revisited}\label{subsec:maxmin} Another family of shock model induced copulas are the so called maxmin copulas introduced recently by Omladi\v{c} and Ru\v{z}i\'{c} \cite{OmRu}. A maxmin copula depends on two functions $\mmphi$ and $\mmpsi\colon [0, 1]\to [0, 1]$, satisfying the properties:
\begin{enumerate}[(F1)]
	\item $\mmphi(0) = \mmpsi(0) = 0$ and $\mmphi(1) = \mmpsi(1) = 1$;
	\item $\mmphi$ and $\mmpsi$ are increasing;
	\item $\mmphi^*(u) = \dfrac{\mmphi(u)}{u}\colon (0, 1]\to [1, \infty]$ and $\mmpsi_*(v) \colon [0, 1]\to [1, \infty]$ are decreasing, where $\mmpsi_*(v) = \begin{cases}
                                       \dfrac{1-\mmpsi(v)}{v-\mmpsi(v)} & \mbox{if } v\neq\mmpsi(v); \\
                                       +\infty & \mbox{if } v=\mmpsi(v)\neq1;  \\
                                       1 & \mbox{if } v=1.
                                     \end{cases}$
\end{enumerate}
A \emph{maxmin copula} is a map $\mmcop\colon [0, 1]\times [0, 1] \to [0, 1]$ defined by
\begin{equation*}\label{eq-maxmin-copula-defintion}
\mmcop_{\mmphi, \mmpsi}(u, v) = uv + \min \{ u(1-v), (\mmphi(u)-u)(v-\mmpsi(v)) \}.
\end{equation*}
Here is the stochastic interpretation of these copulas and of functions $\mmphi$ and $\mmpsi$. (Observe that the random variable $U$ in the following proposition is the same as in Proposition~\ref{prop-marshall-properties}.)
\begin{proposition}\label{prop-maxmin-properties}
	Let independent random variables $X, Y$ and $Z$ be given with respective distribution functions $F_X, F_Y$ and $F_Z$. Define $U=\max\{ X, Z\}$ and $W = \min\{ Y, Z \}$ and let $F_U, F_W$ denote the distribution functions of $U$ and $W$ respectively. Let $H$ be the joint distribution function of $(U, W)$.
	Then:
	\begin{enumerate}[(i)]
		\item $F_U= F_X F_Z$ and $F_W = F_Y + F_Z - F_YF_Z$.
		\item A pair of functions $\mmphi$ and $\mmpsi$ satisfying (F1)--(F3) exists, so that $F_X(x) = \mmphi(F_U(x))$ for all $x$, where $F_U(x)>0$ and $F_Y(y) = \mmpsi(F_W(y))$ for all $y$, where $F_W(y)<1$.
		\item $\mmphi^*\circ F_U = \mmpsi_*\circ F_W$.
		\item In terms of survival functions instead of distribution functions, the second equation in (i) assumes the following equivalent form $\widehat F_W=\widehat F_Y\widehat F_Z$. 		
		\item $H(x, y) = \mmcop_{\mmphi, \mmpsi}(F_U(x), F_W(y))$.
	\end{enumerate}
\end{proposition}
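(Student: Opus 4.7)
\smallskip

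\noindent\textbf{Proof plan.} The strategy is to treat parts (i)--(v) essentially in order, with (i) and (iv) as easy warm-ups, (ii) as a construction step, and (iii) and (v) as the computational heart. Part (i) is immediate from independence: since $X$ and $Z$ are independent,
\[
F_U(x)=P(X\leqslant x,\,Z\leqslant x)=F_X(x)F_Z(x),
\]
and for $W=\min\{Y,Z\}$ I pass to survival functions, using that $\{W>y\}=\{Y>y\}\cap\{Z>y\}$ with $Y,Z$ independent, to obtain $\widehat F_W=\widehat F_Y\widehat F_Z$; this gives (iv) at once, and then $F_W=1-\widehat F_Y\widehat F_Z=F_Y+F_Z-F_YF_Z$, finishing (i).

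For (ii) I define $\mmphi$ and $\mmpsi$ on the ranges of $F_U$ and $F_W$ by $\mmphi(F_U(x))=F_X(x)$ and $\mmpsi(F_W(y))=F_Y(y)$, respectively. Consistency (well-definedness on points where $F_U$ or $F_W$ takes the same value at different arguments) follows from (i): the relations $F_X=F_U/F_Z$ and $F_Y=(F_W-F_Z)/\widehat F_Z$ only depend on $F_U$, resp.\ $F_W$, on the respective coordinate. Axioms (F1) and (F2) are straightforward from $F_X\geqslant F_U$ and $F_Y\leqslant F_W$. For (F3), I compute
\[
\mmphi^*(F_U(x))=\frac{F_X(x)}{F_U(x)}=\frac{1}{F_Z(x)},
\]
which is decreasing in $x$, hence in $u=F_U(x)$; for the second factor,
\[
\frac{1-\mmpsi(F_W(y))}{F_W(y)-\mmpsi(F_W(y))}=\frac{\widehat F_Y(y)}{F_W(y)-F_Y(y)}=\frac{\widehat F_Y(y)}{F_Z(y)\widehat F_Y(y)}=\frac{1}{F_Z(y)},
\]
which is likewise decreasing. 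Extending $\mmphi$ and $\mmpsi$ off the ranges of $F_U,F_W$ while preserving (F1)--(F3) is a routine monotone interpolation (analogous to the Marshall case in Proposition~\ref{prop-marshall-properties}). The boundary cases in the definition of $\mmpsi_*$ correspond exactly to the degenerate situations $\widehat F_Y(y)=0$ or $F_Z(y)=0$, matching the values $1$ and $+\infty$ respectively. The computation above also establishes (iii), since both sides equal $1/F_Z$.

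For (v) I compute $H(x,y)$ directly and split into two cases. For any $(x,y)$,
\[
H(x,y)=P(X\leqslant x,\,Z\leqslant x,\,\{Y\leqslant y\}\cup\{Z\leqslant y\}),
\]
and, using independence,
\[
H(x,y)=F_X(x)F_Z(x)-F_X(x)\widehat F_Y(y)\bigl(F_Z(x)-F_Z(x\wedge y)\bigr).
\]
If $y\geqslant x$, then $F_Z(x)-F_Z(x\wedge y)=0$, so $H(x,y)=F_U(x)$; on the other hand, in the copula formula the inequality $F_Z(x)\widehat F_Z(y)\leqslant F_Z(y)\widehat F_Z(x)$ holds, so the minimum picks out $u(1-v)$, and a short rearrangement using $F_W+\widehat F_W=1$ gives $uv+u(1-v)=u=F_U(x)$. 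If $y<x$, the inequality reverses, the minimum picks out $(\mmphi(u)-u)(v-\mmpsi(v))$, and substituting
\[
\mmphi(u)-u=F_X(x)\widehat F_Z(x),\qquad v-\mmpsi(v)=F_Z(y)\widehat F_Y(y),
\]
followed by expansion of $F_W(y)=F_Y(y)+F_Z(y)\widehat F_Y(y)$ matches the expression for $H(x,y)$ obtained above.

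The main obstacle is the bookkeeping in part (v): one must carefully verify that the two branches of the minimum correspond precisely to the two regions $y\geqslant x$ and $y<x$, and that the algebraic identities close up. Secondary care is needed in (ii) to handle the degenerate cases ($F_Z=0$, $\widehat F_Y=0$, or atoms of $F_U$, $F_W$) so that $\mmphi,\mmpsi$ are well defined and the monotonicity of $\mmphi^*$ and $\mmpsi_*$ carries over from the monotonicity of $1/F_Z$.
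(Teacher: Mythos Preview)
The paper does not actually prove Proposition~\ref{prop-maxmin-properties}; it is stated in Subsection~\ref{subsec:maxmin} as background material, with the construction attributed to Omladi\v{c} and Ru\v{z}i\'{c}~\cite{OmRu}. There is thus no paper-proof to compare against. Your argument is essentially correct: (i) and (iv) are immediate from independence; the key identities $\mmphi^*(F_U)=1/F_Z=\mmpsi_*(F_W)$ simultaneously give (iii) and verify (F3) for (ii); and the case split in (v), governed by the sign of $F_Z(x)-F_Z(y)$, correctly identifies which branch of the minimum is active and closes the algebra.

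Two points deserve tightening. First, your stated reason for well-definedness in (ii) --- that ``$F_X=F_U/F_Z$ only depends on $F_U$'' --- is not right as written, since $F_Z$ also varies with the argument. The correct argument is: if $F_U(x)=F_U(x')>0$ with $x\leqslant x'$, then monotonicity gives $F_X(x)\leqslant F_X(x')$ and $F_Z(x)\leqslant F_Z(x')$, and together with $F_X(x)F_Z(x)=F_X(x')F_Z(x')>0$ this forces $F_X(x)=F_X(x')$; the analogous argument for $\mmpsi$ uses the survival-function product $\widehat F_W=\widehat F_Y\widehat F_Z$. Second, the ``routine monotone interpolation'' extending $\mmphi,\mmpsi$ off the ranges of $F_U,F_W$ must preserve not only monotonicity but also the decreasingness of $\mmphi^*$ and $\mmpsi_*$ required by (F3); this is possible (and the paper later records explicit such extensions in \eqref{eq-phi-ext}--\eqref{eq-chi-ext}) but is slightly more than a bare monotone fill-in and merits at least a one-line check that the interpolation interval is nonempty.
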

Observe that, as in the first paragraph after Proposition~\ref{prop-marshall-properties}, functions $\mmphi$ and $\mmpsi$ are necessarily continuous, although not unique. Note also that the roles of generating functions $\marphi$ in Marshall's and maxmin models are equivalent, while the roles of $\marpsi$ and $\mmpsi$ may be seen opposite in some sense.

Marshall and maxmin copulas were also extended to the imprecise setting in \cite{OmSk} for the bivariate case. We extend them further to the multivariate case in Sections~\ref{subsec:imp:marsh} and \ref{subsec:imp:maxmin}.

\subsection{Reflected maxmin copulas revisited}\label{subsec:reflected} Let $X,Y,Z$ be independent variables with probability distribution functions $F_X$, $F_Y$, and $F_Z$ respectively. Define $U=\max\{X,Z\}$ and $W= \min\{Y,Z\}$ as in Subsection \ref{subsec:maxmin}. Recall the definition of a survival function from Subsection \ref{subsec:prob}. So, we have
\[
    F_U=F_XF_Z, \quad \mbox{and}\quad \widehat{F}_W= \widehat{F}_Y \widehat{F}_Z\quad\mbox{or}\quad F_W=\widehat{\widehat{F}_Y \widehat{F}_Z}.
\]
From Subsection \ref{subsec:maxmin} we recall the existence of functions $\varphi$ and $\chi$ such that $\varphi(F_U)=F_X$, and $\chi(F_W)=F_Y$, whenever $F_U>0$ and ${F}_W<1$, so that
\begin{equation}\label{eq:defineRMM}
  \varphi(F_XF_Z)=F_X\quad\mbox{and}\quad\chi\left(\widehat{\widehat{F}_Y \widehat{F}_Z}\right)=F_Y.
\end{equation}
Rewrite functions $\varphi$ and $\chi$ into
\begin{equation}\label{eq:rewrite}
  f(x)=\varphi(x)-x,\quad g(x)=1-x-\chi(1-x)
\end{equation}
to get
\[
    f(F_U)=f(F_XF_Z)=\varphi(F_U)-F_XF_Z=F_X-F_XF_Z=F_X\widehat{F}_Z
\]
if $F_U>0$ and
\[
    g(\widehat{F}_W)=F_W-\chi(F_W)=F_Y+F_Z-F_YF_Z-F_Y = \widehat{F}_YF_Z
\]
if $F_W<1$, which summarizes into
\begin{equation}\label{eq:f_g_product}
  f(F_XF_Z)=F_X\widehat{F}_Z\quad\mbox{and}\quad g(\widehat{F}_Y\widehat{F}_{Z})=\widehat{F}_YF_Z,
\end{equation}
whenever $F_XF_Z>0$ and $\widehat{F}_Y\widehat{F}_Z>0$.

There is a general notion of reflection of the variables in the copula theory corresponding to reflection of the horizontal and vertical bisectors of the unit square (cf.\ \cite[p.~30]{DuSe}). In the paper \cite{KoOm} reflection $y\mapsto 1-y$ that turns a general copula $C(x,y)$ into $y-C(x,1-y)$ is used, together with the replacement of the generators $\varphi$ and $\chi$ with $f$ and $g$ to transform the class of maxmin copulas into the class of what the authors call \emph{reflected maxmin copulas}, RMM for short. They also prove (\cite[Lemmas~1\&2]{KoOm}):

\textbf{Claim.} \emph{Conditions (F1)--(F3) are satisfied for the original generating functions of the maxmin copula if and only if the following conditions are satisfied by functions $f$ and $g$:
\begin{enumerate}[(G1)]
  \item $f(0)=g(0)=0$, $f(1)=g(1)=0$, $f^*(1)=g^*(1)=0$,
  \item the functions $\widehat{f}(u)=f(u)+u$ and $\widehat{g}(w)=g(w)+w$ are increasing on $[0,1]$,
  \item the functions $f^*$ and $g^*$ are decreasing on $(0,1]$.
\end{enumerate}}
Here we use the notation from \cite{KoOm} for the functions
\[
    f^*(u)=\frac{f(u)}{u},\ \ g^*(w)=\frac{g(w)}{w},
\]
for $u,w>0$. In addition we define
\[
    f^*(0)=\left\{
             \begin{array}{ll}
               \lim_{u\downarrow0}\frac{f(u)}{u} & \hbox{if it exists;} \\
               \infty & \hbox{otherwise,}
             \end{array}
           \right.
\]
and similarly for $g^*(0)$. Hence $f,g:[0,1]\rightarrow[0,1]$ and $f^*,g^*:[0,1]\rightarrow[0,\infty]$.
Also, using \cite[Theorem 3]{KoOm}, we know that the copula corresponding to the random vector $(U,W)$ with respect to the distribution function of $U$ and survival function of $W$ is equal to
\begin{equation}\label{eq:rmm}
  \rmmcop_{f,g}(x,y)=\max\{0,xy-f(x)g(y)\}.
\end{equation}
So, clearly, the reflected maxmin copula is the copula obtained from the corresponding (maxmin) copula of the random vector $(U,W)$ after applying the reflection on the second variable, and the functions $f$ and $g$ satisfying Conditions (G1)--(G3) are its generators.

\section{Imprecise shock-model copulas revisited -- Marshall's and maxmin}\label{sec:revisited}

\subsection{Order relations generated by shock models.} The theory of $p$-boxes, univariate and bivariate, is based on the sets of probability distributions that lie between the boundary distributions $\low F$ and $\up F$. In order to transfer the theory of Marshall's copulas from precise distribution functions to the more general case of $p$-boxes, the critical step is to determine, whether the order on the set of distribution functions imposed by $p$-boxes is preserved on the corresponding copulas. As shown in \cite{OmSk}, the order is indeed preserved, yet in different ways for Marshall's and maxmin case.

From Subsections \ref{subsec:marsall} and \ref{subsec:maxmin} recall the triples of independent distribution functions $(F_X, F_Y, F_Z)$ which give rise to the pairs of not necessarily independent functions $(F_U, F_V)$ in the Marshall's case, and $(F_U, F_W)$ in the maxmin case; and then further to pairs of generating functions ${\marphi}, {\marpsi}$ and $\mmphi, \mmpsi$, and to corresponding copulas $\marcop_{{\marphi}, {\marpsi}}$ and $\mmcop_{\mmphi, \mmpsi}$ respectively. We follow \cite{OmSk} to introduce imprecision in these models. We allow $F_X$ and $F_Y$ to be imprecise, while for technical reasons $F_Z$ is assumed precise. Replace $F_X$ and $F_Y$ with $p$-boxes $(\low F_X, \up F_X)$ and $(\low F_Y, \up F_Y)$. So, we consider triples $(F_X, F_Y, F_Z)$ where $\low F_X \le F_X \le \up F_X$ and $\low F_Y \le F_Y \le \up F_Y$, or in $p$-box notation $F_X\in\mathcal{F}_{(\low F_X ,\up F_X)}$ and $F_Y\in\mathcal{F}_{(\low F_Y ,\up F_Y)}$.

We now relate respective copulas $\marcop_{{\marphi}, {\marpsi}}$ and $\mmcop_{\mmphi, \mmpsi}$ to the triple $(F_X, F_Y, F_Z)$ via distribution functions $F_U, F_V$ and $F_W$. Let pairs of generating functions $\marphi, \marpsi$, and $\mmphi,\mmpsi$, all mapping $[0, 1]\to [0, 1]$, be such that $\marphi(F_U) = F_X, \marpsi(F_V) = F_Y$ and $\mmpsi(F_W) = F_Y$ whenever $F_U>0,F_V>0,F_W<1$. If they also satisfy the corresponding Conditions (P1)--(P3) and (F1)--(F3), then we say that the triple $(\marphi,\marpsi,\mmpsi)$ is \emph{associated} to the triple $(F_X, F_Y, F_Z)$. In this case we will also say that any of the generating functions $\marphi, \marpsi$, or $\mmpsi$ is associated to the triple $(F_X, F_Y, F_Z)$. Now, these conditions determine the generating functions only on the images of the corresponding distribution functions and with these functions changing within their $p$-boxes we have to adjust the appropriate extensions so that the required order relations are satisfied. For instance, if $(F_X, F_Y, F_Z)$ and $(F'_X, F'_Y, F_Z)$ are two respective triples with $F'_X \le F_X$  and $F'_Y \le F_Y$, relations $\marphi' \le \marphi, \marpsi' \le \marpsi$ and $\mmpsi' \le \mmpsi$ will not be satisfied necessarily for any pairs of triples of generating functions $(\marphi,\marpsi, \mmpsi)$ and $(\marphi',\marpsi', \mmpsi')$ associated with them. Rather surprisingly, it is possible to find explicit formulas for the extensions that do preserve the order. We present here the solution of this problem from \cite{OmSk}.

Denote by $f(x+)$, respectively $f(x-)$, the right limit, respectively the left limit of a monotone (increasing) function $f$ at $x$; note that the limits exist because $f$ is monotone. For distribution functions $F_X$ and $F_Z$ let $F_U=F_XF_Z$. Choose a $u\in (0, 1)$ and let $x_0$ be any value such that $F_U(x_0-) \leqslant u \leqslant F_U(x_0+)$.
Define
\begin{equation}\label{eq-phi-ext}
\mmphi(u) =
\begin{cases}
0 & \text{if } u = 0; \\
F_X(x_0-) & \text{if } u_-\leqslant u \leqslant u_l; \\
\dfrac{u}{F_Z(x_0)} & \text{if } u_l\leqslant u \leqslant u_u; \\
F_X(x_0+) & \text{if } u_u\leqslant u \leqslant u_+ ; \\
1 & \text{if } u = 1,
\end{cases}
\end{equation}
where
\begin{align*}
u_- & = F_X(x_0-)F_Z(x_0-) = F_U(x_0-), & u_l & = F_X(x_0-)F_Z(x_0), \\
u_+ & = F_X(x_0+)F_Z(x_0+) = F_U(x_0+), & u_u & = F_X(x_0+)F_Z(x_0).
\end{align*}
Furthermore, choose a $v\in (0, 1)$ and let $y_0$ be any value such that $F_W(y_0-) \leqslant v \leqslant F_W(y_0+)$. The extension of $\mmpsi$ at $v$ is defined as follows:
\begin{equation}\label{eq-chi-ext}
\mmpsi(v) =
\begin{cases}
0 & \text{if } v = 0; \\
F_Y(y_0-) & \text{if } v_-\leqslant v \leqslant v_l; \\
\dfrac{v-F_Z(y_0)}{1-F_Z(y_0)} & \text{if } v_l\leqslant v \leqslant v_u; \\
F_Y(y_0+) & \text{if } v_u\leqslant v \leqslant v_+ ; \\
1 & \text{if } v = 1,
\end{cases}
\end{equation}
where
\[
\begin{array}{cc}
  v_- & =F_Y(y_0-) + F_Z(y_0-) -F_Y(y_0-)F_Z(y_0-) = F_W(y_0-), \\
  v_l & =F_Y(y_0-) + F_Z(y_0) -F_Y(y_0-)F_Z(y_0),\quad\quad\quad\ \quad\quad\quad \\
  v_+ & =F_Y(y_0+) + F_Z(y_0+) -F_Y(y_0+)F_Z(y_0+) = F_W(y_0+), \\
  v_u & =F_Y(y_0-) + F_Z(y_0) -F_Y(y_0-)F_Z(y_0).\quad\quad\quad\ \quad\quad\quad
\end{array}
\]
The generating functions  obtained using the extension \eqref{eq-phi-ext} for $\marphi$, and appropriately adjusted for $\marpsi$, and the extension \eqref{eq-chi-ext} for $\mmpsi$, are associated with the triple $(F_X, F_Y, F_Z)$. Moreover, the following lemmas hold (cf.\ \cite{OmSk}).
\begin{lemma}\label{lem-phi-order}
	Let $F'_X\leqslant F_X$ and $F_Z$ be given, and let $F_U=F_XF_Z$ and $F'_U=F'_XF_Z$. Then $\mmphi' \leqslant \mmphi$, where $\mmphi'$ and $\mmphi$ are defined by applying \eqref{eq-phi-ext} to $F'_X$ and $F_X$ respectively.
\end{lemma}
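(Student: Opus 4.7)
My plan is to prove $\mmphi'(u)\leq \mmphi(u)$ pointwise on $[0,1]$. The boundary values $u=0,1$ are fixed by \eqref{eq-phi-ext}. For $u\in(0,1)$, I would choose the canonical valid points
\[
x_0 = \inf\{x : F_U(x)\geq u\},\qquad x_0'=\inf\{x : F'_U(x)\geq u\}.
\]
The hypothesis $F'_X\leq F_X$ gives $F'_U = F'_XF_Z\leq F_XF_Z = F_U$, so the set in the second infimum is contained in the first, yielding $x_0\leq x_0'$ and, by monotonicity of $F_Z$, the key inequality $F_Z(x_0)\leq F_Z(x_0')$.

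I would then perform a case analysis according to which of the three sub-intervals $[u_-,u_l]$, $[u_l,u_u]$, $[u_u,u_+]$ (and the analogous primed sub-intervals for $F'_U$) contains $u$. In the central-central case, $\mmphi(u)=u/F_Z(x_0)$ and $\mmphi'(u)=u/F_Z(x_0')$, and the key inequality gives the conclusion at once. The mixed cases combine this monotonicity with the region-defining inequalities; for instance, when $u\in[u_-,u_l]$ while $u\in[u_l',u_u']$,
\[
\mmphi'(u) = \frac{u}{F_Z(x_0')} \leq \frac{F_X(x_0-)F_Z(x_0)}{F_Z(x_0')}\leq F_X(x_0-) = \mmphi(u),
\]
using $u\leq u_l = F_X(x_0-)F_Z(x_0)$, and the remaining mixed cases reduce to one-line estimates of the same flavor.

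The main obstacle I anticipate is the outer-outer cases, where $u$ falls in an extremal sub-interval for both distributions so that $\mmphi(u)$ and $\mmphi'(u)$ are read off as values of $F_X$ and $F'_X$ at one-sided limits at two distinct points $x_0\neq x_0'$. The bare inequality $F'_X\leq F_X$ is insufficient here, since $x_0\leq x_0'$ forces $F_X(x_0+)\leq F_X(x_0'-)$, which points in the wrong direction. To circumvent this I would exploit the independence of \eqref{eq-phi-ext} from the particular valid choice of the auxiliary point: whenever $x_0'$ is also a valid point for $F_U$ I re-evaluate $\mmphi(u)$ at $x_0'$, reducing the comparison to a single-point one in which $F'_X(x_0'\pm)\leq F_X(x_0'\pm)$ applies directly. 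In the genuinely sharp subcase, where $x_0<x_0'$ strictly and $u$ lies outside $[F_U(x_0'-),F_U(x_0'+)]$, combining the region-defining inequalities $F'_X(x_0'\pm)F_Z(x_0'\pm)\leq u$ and $u\leq F_X(x_0\pm)F_Z(x_0\pm)$ with monotonicity of $F_Z$ delivers the required estimate.
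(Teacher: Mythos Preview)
The paper does not supply its own proof of this lemma; it is quoted from \cite{OmSk} with the preamble ``the following lemmas hold (cf.\ \cite{OmSk})'' and no argument is reproduced. So there is nothing in the present paper to compare your proof against.

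On its own merits, your strategy is sound and is essentially how the result is proved in the cited source: fix $u$, pick valid auxiliary points $x_0\leqslant x_0'$ via $F'_U\leqslant F_U$, exploit $F_Z(x_0)\leqslant F_Z(x_0')$ for the central comparison, and treat the remaining cases through the region-defining inequalities. Your sketch of the ``sharp'' outer--outer subcase is correct in spirit; the clean way to finish it is to note that whenever $x_0<x_0'$ one has $F_Z(x_0+)\leqslant F_Z(x_0'-)$, so combining $F'_X(x_0'-)F_Z(x_0'-)\leqslant u\leqslant F_X(x_0+)F_Z(x_0+)$ (or the analogous pair with $+$ and $-$ swapped according to the subcase) and dividing by the common positive factor $F_Z(x_0'-)$ gives the estimate directly. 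Positivity of the divisor follows from $u>0$ together with the upper region bound at $x_0$. One small caution: your blanket statement ``$F'_X(x_0'\pm)F_Z(x_0'\pm)\leqslant u$'' is not true in every subcase (for $u\in[u_-',u_l']$ you only have the lower inequality $F'_X(x_0'-)F_Z(x_0'-)\leqslant u$, not the upper one); when you write the proof out you will need to select the correct one of the two bounding inequalities for each of the nine subcases, but in every instance the needed inequality is available and the argument closes.
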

\begin{lemma}\label{lem-chi-order}
	Let $F'_Y\leqslant F_Y$ and $F_Z$ be given, and let $F_W=F_Y + F_Z - F_YF_Z$ and ${F'_W}=F'_Y + F'_Z - F'_YF'_Z$. Then $\mmpsi' \leqslant \mmpsi$, where $\mmpsi'$ and $\mmpsi$ are defined by applying \eqref{eq-chi-ext} to $F'_Y$ and $F_Y$ respectively.
\end{lemma}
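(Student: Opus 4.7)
The plan is to mirror the proof of Lemma~\ref{lem-phi-order} from \cite{OmSk}, adapted to the fact that here $F_W=1-(1-F_Y)(1-F_Z)$ is a dual combination rather than the product $F_U=F_XF_Z$. First, from $F'_Y\leq F_Y$ one immediately gets
\[
F'_W(y)=1-(1-F'_Y(y))(1-F_Z(y))\leq 1-(1-F_Y(y))(1-F_Z(y))=F_W(y)
\]
for every $y$. Hence the containment $\{y\colon F'_W(y)\geq v\}\subseteq\{y\colon F_W(y)\geq v\}$ lets me, for each fixed $v\in(0,1)$, pick admissible points $y_0=\inf\{y\colon F_W(y)\geq v\}$ and $y'_0=\inf\{y\colon F'_W(y)\geq v\}$ with $y_0\leq y'_0$. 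As in \cite{OmSk}, the value of the extension does not depend on the particular admissible choice of these points.

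The key technical step is to rewrite the three-piece definition \eqref{eq-chi-ext} in the compact form
\[
\mmpsi(v)=\max\Bigl\{F_Y(y_0-),\,\min\Bigl\{F_Y(y_0+),\,\tfrac{v-F_Z(y_0)}{1-F_Z(y_0)}\Bigr\}\Bigr\},
\]
with the obvious analogue for $\mmpsi'(v)$. This is verified directly: the middle expression $C(v):=(v-F_Z(y_0))/(1-F_Z(y_0))$ is increasing in $v$, equals $F_Y(y_0-)$ at $v=v_l$ and $F_Y(y_0+)$ at $v=v_u$, so clipping $C(v)$ to the interval $[F_Y(y_0-),F_Y(y_0+)]$ reproduces the low, middle and high pieces of \eqref{eq-chi-ext} on $[v_-,v_l]$, $[v_l,v_u]$ and $[v_u,v_+]$, respectively.

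If $y_0=y'_0$, the inequality $\mmpsi'(v)\leq \mmpsi(v)$ follows at once from the closed form, since $F'_Y(y_0\pm)\leq F_Y(y_0\pm)$, the operations $\max,\min$ are monotone componentwise, and the middle expression is the same on both sides. The main obstacle is the case $y_0<y'_0$: the outer clipping bounds now move in the \emph{wrong} direction, because $F_Y(y_0\pm)\leq F_Y(y'_0\pm)$, while $F_Z(y_0)\leq F_Z(y'_0)$ together with the fact that $t\mapsto(v-t)/(1-t)$ is decreasing on $[0,1)$ yields the correct direction for the middle term. To reconcile these opposing effects, I would exploit the sandwich $F'_W(y'_0-)\leq v\leq F_W(y_0+)$, which after expanding the definitions of $F_W$ and $F'_W$ becomes $F_Y(y_0+)\geq(v-F_Z(y_0+))/(1-F_Z(y_0+))$ and $F'_Y(y'_0-)\leq(v-F_Z(y'_0-))/(1-F_Z(y'_0-))$. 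Plugging these into the closed forms and using the monotonicity of $F_Z$ on $[y_0,y'_0]$ collapses both sides to comparable values of the decreasing function $t\mapsto(v-t)/(1-t)$, and $\mmpsi'(v)\leq \mmpsi(v)$ follows. The delicate bookkeeping of which of the three pieces is active on each side, together with the handling of possible discontinuities of $F_Z$ at $y_0$ or $y'_0$, is where the execution of the proof demands care; this parallels the analogous step for $\mmphi$ in~\cite{OmSk}.
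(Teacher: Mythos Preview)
The paper does not prove this lemma; it merely records it with the citation ``cf.\ \cite{OmSk}''. Your plan---adapting the $\mmphi$-argument by rewriting \eqref{eq-chi-ext} as the clipped linear form $\max\{F_Y(y_0-),\min\{F_Y(y_0+),(v-F_Z(y_0))/(1-F_Z(y_0))\}\}$ and then comparing via the monotone map $t\mapsto (v-t)/(1-t)$---is exactly the natural route and is presumably what \cite{OmSk} does. The sketch is sound: in the nontrivial case $y_0<y'_0$, the two sandwich inequalities you isolate do indeed reduce the comparison to
\[
\mmpsi'(v)\ \leqslant\ \frac{v-F_Z(y'_0-)}{1-F_Z(y'_0-)}\ \leqslant\ \frac{v-F_Z(y_0+)}{1-F_Z(y_0+)}\ \leqslant\ \mmpsi(v),
\]
where the outer inequalities come from the clipped form together with $F'_W(y'_0-)\leqslant v\leqslant F_W(y_0+)$, and the middle inequality from $F_Z(y_0+)\leqslant F_Z(y'_0-)$. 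So the ``delicate bookkeeping'' you flag is genuinely short once organized this way; there is no missing idea.
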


\subsection{Imprecise Marshall's copulas and imprecise maxmin copulas}\label{subsec:imp:marmm}
Based on the results in the previous subsection, we can now define the imprecise version of the Marshall's and maxmin copulas.
\label{def-i-mar-cop}
	The family of copulas
	\begin{equation}\label{eq-i-mar-cop}
	\marsetcop = \{ \marcop_{\marphi, \marpsi} \colon \low{\marphi}\leqslant  \marphi \leqslant  \up{\marphi}, \low{\marpsi}\leqslant  \marpsi \leqslant  \up{\marpsi}\},
	\end{equation}
where $\low{\marphi} \leqslant  \up\marphi$ and $\low{\marpsi} \leqslant  \up{\marpsi}$, and all $\marphi$ and $\marpsi$, including the bounds, satisfy Conditions (P1)--(P3), is called an \emph{imprecise Marshall's copula}.
\label{def-i-mm-cop}
	The family of copulas
	\begin{equation}\label{eq-i-mm-cop}
	\mmsetcop = \{ \mmcop_{\mmphi, \mmpsi} \colon \low{\mmphi}\leqslant  \mmphi \leqslant  \up{\mmphi}, \low{\mmpsi}\leqslant  \mmpsi \leqslant  \up{\mmpsi}\},
	\end{equation}
where $\low{\mmphi} \leqslant  \up\mmphi$ and $\low{\mmpsi}\leqslant  \up{\mmpsi}$, and all $\mmphi$ and $\mmpsi$, including the bounds, satisfy Conditions (F1)--(F3),
is called an \emph{imprecise maxmin copula}. 

Here is the stochastic interpretation of the imprecise shock model copulas. Let $X$ and $Y$ be random variables, whose distributions are given in terms of $p$-boxes $(\low F_X, \up F_X)$ and $(\low F_Y, \up F_Y)$, and $Z$ a random variable with a precise distribution function $F_Z$.
To every triple $(F_X, F_Y, F_Z)$ where $F_X \in \mathcal{F}_{(\low F_X, \up F_X)}$ and $F_Y\in \mathcal{F}_{(\low F_Y, \up F_Y)}$, there exists a Marshall's copula $\marcop_{\marphi, \marpsi}$ with generating functions $\marphi,\marpsi$ given by \eqref{eq-phi-ext}, such that $F_U$ and $F_V$ are the respective distribution functions of random variables $U=\max\{ X, Z\}$ and $V=\max\{ Y, Z \}$, and $\marcop_{\marphi, \marpsi}(F_U, F_V)$ is their joint distribution function.  Next, we will denote the minimal generating functions associated to the triple $(\low F_X, \low F_Y, F_Z)$ by $\low \marphi$ and $\low \marpsi$, and the corresponding maximal generating functions associated to the triple $(\up F_X, \up F_Y, F_Z)$ by $\up \marphi$ and $\up \marpsi$. (Note that $\low \marphi,\low \marpsi,\up \marphi,\up \marpsi$ themselves are not necessarily constructed by Equation \eqref{eq-phi-ext}. The existence of these functions was proven in \cite[Proposition 4]{OmSk}.) Moreover, we will denote by $\low F_U$ and $\low F_V$ the infimum of the distribution functions of $U$ and $V$ respectively, and by $\up F_U$ and $\up F_V$ the supremum of the distribution functions of $U$ and $V$ respectively.

Similarly, for the maxmin case, given a triple $(F_X, F_Y, F_Z)$ where $F_X \in \mathcal{F}_{(\low F_X, \up F_X)}$ and $F_Y\in \mathcal{F}_{ (\low F_Y, \up F_Y)}$, there exists a maxmin copula $\mmcop_{\mmphi, \mmpsi}$ with generating functions  $\mmphi$ and $\mmpsi$ given by \eqref{eq-phi-ext} and \eqref{eq-chi-ext}, respectively, such that $F_U$ and $F_W$ are the respective distribution functions of random variables $U = \max\{ X, Z\}$ and $W = \min\{ Y, Z \}$, and $\mmcop_{\mmphi, \mmpsi}(F_U, F_W)$ is their joint distribution function. Next, let $\low \mmphi$ and $\low \mmpsi$ be the minimal generating functions associated to the triple $(\low F_X, \low F_Y, F_Z)$, not necessarily constructed by Equations \eqref{eq-phi-ext}\&\eqref{eq-chi-ext}.
Also, let $\up \mmphi$ and $\up \mmpsi$ be the maximal generating functions associated to the triple $(\up F_X, \up F_Y, F_Z)$, not necessarily constructed by Equations \eqref{eq-phi-ext}\&\eqref{eq-chi-ext}. The existence of these functions was likewise proven in \cite[Proposition 4]{OmSk}. Finally, the supremum and infimum of the distribution functions $F_U,F_W$ will be denoted by $\low F_U, \low F_W$ and $\up F_U, \up F_W$ respectively.

The following theorems describe the properties of the imprecise Marshall's and maxmin copulas. Recall that the definitions of functions $\low\marphi^*, \up\marphi^*,  \low \marpsi^*, \up \marpsi^*$ and $\low \mmpsi_*, \up \mmpsi_*$ are exhibited in Conditions (P3) and (F3), respectively.

\begin{theorem}[{Properties of imprecise Marshall's copulas \cite[Theorem 3]{OmSk}}]  \label{main:marshall}
	In the situation described above we have:
	\begin{enumerate}[(i)]
		\item $\low \marphi \leqslant  \up \marphi$ and $\low{\marpsi}\leqslant  \up{\marpsi}$.
		\item $\marcop_{\low{\marphi}, \low{\marpsi}}\leqslant \marcop_{{\marphi}, {\marpsi}}\leqslant \marcop_{\up{\marphi}, \up{\marpsi}}$, where $ \marcop_{{\marphi}, {\marpsi}}$ is the Marshall's copula corresponding to some triple $(F_X, F_Y, F_Z)$, where $F_X\in \mathcal{F}_{(\low F_X,\up F_X)}$ and $F_Y\in\mathcal{F}_{(\low F_Y ,\up F_Y)}$.
		\item
		\[
		\low F_U = \low F_X F_Z \ \ \ \ \ \low F_V  = \low F_Y F_Z \ \ \ \ \
		\up F_U = \up F_X F_Z \ \ \ \ \  \up F_V  = \up F_Y F_Z.
		\]
		\item
		\begin{align*}
		\low F_X(x) & = \low{\marphi}(\low F_U(x)), \text{ if } \low F_U(x)>0; & \up F_X(x) & = \up{\marphi}(\up F_U(x)), \text{ if } \up F_U(x)>0; \\
		\low F_Y(y) & = \low{\marpsi}(\low F_V(y)), \text{ if } \low F_V(y)>0; & \up F_Y(y) & = \up{\marpsi}(\up F_V(y)), \text{ if } \up F_V(y)>0.
		\end{align*}
		\item $\low\marphi^*\circ\low F_U = \low \marpsi^* \circ\low F_V= \up\marphi^*\circ\up F_U = \up \marpsi^* \circ\up F_V$ if $\low F_U, \low F_V, \up F_U, \up F_V>0$ .
		\item $\low F_U\leqslant \up F_U$ and $\low F_V\leqslant \up F_V$.
		\item The distributions of the random variables $U = \max\{ X, Z \}$ and $V=\max\{ Y, Z\}$ are described with the $p$-boxes $(\low F_U, \up F_U)$ and  $(\low F_V, \up F_V)$  respectively.
		\item $\marcop_{\low{\marphi}, \low{\marpsi}}(\low F_U, \low F_V)\leqslant  \marcop_{\up{\marphi}, \up{\marpsi}}(\up F_U, \up F_V)$.
		\item The joint distribution of $(U, V)$ is described by a bivariate $p$-box
		\begin{equation*}
		(\low H, \up H) = (\marcop_{\low{\marphi}, \low{\marpsi}}(\low F_U, \low F_V), \marcop_{\up{\marphi}, \up{\marpsi}}(\up F_U, \up F_V)).
		\end{equation*}				
	\end{enumerate}	
\end{theorem}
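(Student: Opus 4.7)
The plan is to treat the nine assertions not in the listed order but by logical dependency, reducing everything to Proposition \ref{prop-marshall-properties}, to Lemma \ref{lem-phi-order} (together with its straightforward $\marpsi$-analogue), to the monotonicity of the Marshall formula in its generators, and to the fact that $F_Z$ is precise.

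I would first handle the distributional items (iii), (vi), and (vii). Part (iii) is immediate from Proposition \ref{prop-marshall-properties}(i): since $F_U=F_XF_Z$ and $F_Z$ is fixed and nonnegative, taking pointwise infimum and supremum as $F_X$ ranges over $(\low F_X,\up F_X)$ multiplies each bound by $F_Z$, and an identical argument handles $F_V$. Part (vi) then follows by multiplying $\low F_X\leqslant\up F_X$ pointwise by $F_Z\geqslant 0$. For (vii), in one direction every admissible $F_X$ yields $F_U=F_XF_Z$ lying in the $p$-box $(\low F_U,\up F_U)$ by (iii) and (vi), while tightness of the extreme bounds is realized by $F_X=\low F_X$ and $F_X=\up F_X$ themselves.

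Next I would prove (i). The construction \eqref{eq-phi-ext} delivers a canonical associated $\marphi$ from any triple $(F_X,F_Y,F_Z)$, and Lemma \ref{lem-phi-order} says that replacing $F_X$ by a smaller $F'_X$ yields an extension $\marphi'$ lying pointwise below $\marphi$; applying this with $F'_X=\low F_X$ and $F_X=\up F_X$ gives $\low\marphi\leqslant\up\marphi$, and the same argument run on the $V$-coordinate for $\marpsi$ delivers the other inequality. Items (iv) and (v) are then almost immediate: (iv) is the defining property of associated generating functions specialized to the extreme triples, and for (v) I would combine Proposition \ref{prop-marshall-properties}(iv), which gives $\marphi^*\circ F_U=\marpsi^*\circ F_V$ for each triple separately, with Proposition \ref{prop-marshall-properties}(v), which rewrites this common value as $1/F_Z$, a quantity determined solely by the precise $F_Z$ and therefore identical for the lower and upper triples.

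I expect the main technical obstacle to lie in verifying that the piecewise definition \eqref{eq-phi-ext} actually satisfies (P1)--(P3) and enjoys the monotonicity asserted in Lemma \ref{lem-phi-order} across all the awkward boundary cases $x_0$ where $F_U$ jumps or plateaus (this is of course deferred to \cite{OmSk}, but it is the genuine content). Granting this, (ii), (viii), (ix) are settled by monotonicity: in the form $\marcop_{\marphi,\marpsi}(u,v)=uv\min\{\marphi(u)/u,\marpsi(v)/v\}$ the Marshall copula is pointwise nondecreasing in each of $\marphi$ and $\marpsi$, so (i) yields (ii); combining (ii) with (iii), (vi) and the componentwise monotonicity of any copula in its arguments yields (viii); and (ix) then assembles (vii) and (viii) into the claimed bivariate $p$-box description of the joint distribution of $(U,V)$.
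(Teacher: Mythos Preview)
The paper states this theorem as a citation from \cite{OmSk} and supplies no proof of its own, so there is no in-paper argument to compare against directly; your outline is essentially the one the paper later uses for the multivariate analogue (Theorem~\ref{thm:marshall multi}), and its overall structure is sound.

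Two small points deserve tightening. First, by the paper's convention $\low\marphi$ and $\up\marphi$ are the \emph{minimal} respectively \emph{maximal} generating functions associated to the extreme triples, not necessarily the canonical extensions built via \eqref{eq-phi-ext}. Your argument for (i) via Lemma~\ref{lem-phi-order} only compares the two canonical extensions; one extra line is needed, namely that $\low\marphi$ lies below the canonical extension for $\low F_X$ (by minimality) and $\up\marphi$ lies above the canonical extension for $\up F_X$ (by maximality), whence the chain $\low\marphi\leqslant\marphi_{\mathrm{can}}^{\,\low{}}\leqslant\marphi_{\mathrm{can}}^{\,\up{}}\leqslant\up\marphi$. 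Second, (i) by itself does not yield (ii): to bound $\marcop_{\marphi,\marpsi}$ for an \emph{arbitrary} admissible $(\marphi,\marpsi)$ you need the full sandwich $\low\marphi\leqslant\marphi\leqslant\up\marphi$ (and likewise for $\marpsi$), which requires two further applications of Lemma~\ref{lem-phi-order} together with the minimality/maximality just mentioned---this is precisely the content of \cite[Proposition~4]{OmSk} invoked in the paragraph preceding the theorem. With those two clarifications your proof is complete.
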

\begin{theorem}[{Properties of imprecise maxmin copulas \cite[Theorem 4]{OmSk}}]\label{main:maxmin}
	In the above situation we have:
	\begin{enumerate}[(i)]
		\item $\low\mmphi\leqslant \up \mmphi$ and $\low \mmpsi\leqslant \up \mmpsi$.
		\item $\mmcop_{\low{\mmphi}, \up{\mmpsi}} \leqslant \mmcop_{\mmphi, \mmpsi} \leqslant \mmcop_{\up{\mmphi}, \low{\mmpsi}}$ where $\mmcop_{{\mmphi}, {\mmpsi}}$ is a maxmin copula corresponding to some triple $(F_X, F_Y, F_Z)$, where $F_X\in \mathcal{F}_{(\low F_X, \up F_X)}$ and $F_Y\in \mathcal{F}_{(\low F_Y, \up F_Y)}$.
		\item
		\begin{align*}
		\low F_U & = \low F_X F_Z, & \low F_W & = \low F_Y + F_Z -\low F_Y F_Z, \\
		\up F_U & = \up F_X F_Z, & \up F_W & = \up F_Y + F_Z -\up F_Y F_Z.
		\end{align*}
		\item
		\begin{align*}
		\low F_X(x) & = \low{\mmphi}(\low F_U(x)), \text{ if } \low F_U(x)>0; & \up F_X(x) & = \up{\mmphi}(\up F_U(x)), \text{ if } \up F_U(x)>0; \\
		\low F_Y(y) & = \low{\mmpsi}(\low F_W(y)), \text{ if } \low F_W(y)<1; & \up F_Y(y) & = \up{\mmpsi}(\up F_W(x)), \text{ if } \up F_W(y)<1.
		\end{align*}
		\item $\low \mmphi^* \circ \low F_U = \low \mmpsi_* \circ \low F_W= \up \mmphi^* \circ \up F_U = \up \mmpsi_* \circ \up F_W$ if $\low F_U, \up F_U>0$ and $\low F_W,\up F_W<1$.
		\item $\low F_U\leqslant \up F_U$ and $\low F_W\leqslant \up F_W$.
		\item The distributions of the random variables $U = \max\{ X, Z \}$ and $W=\min\{ Y, Z\}$ are described with the $p$-boxes $(\low F_U, \up F_U)$ and  $(\low F_W, \up F_W)$  respectively.
		\item $\mmcop_{\low{\mmphi}, \low{\mmpsi}}(\low F_U, \low F_W)\leqslant  \mmcop_{\up{\mmphi}, \up{\mmpsi}}(\up F_U, \up F_W)$.
		\item The joint distribution of $(U, W)$ is described by a bivariate $p$-box
		\begin{equation}\label{eq-maxmin-p-box}
		(\low H, \up H) = (\mmcop_{\low{\mmphi}, \low{\mmpsi}}(\low F_U, \low F_W), \mmcop_{\up{\mmphi}, \up{\mmpsi}}(\up F_U, \up F_W)).
		\end{equation}	
	\end{enumerate}
\end{theorem}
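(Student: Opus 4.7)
The plan is to reduce every claim to Proposition \ref{prop-maxmin-properties} applied to each admissible triple $(F_X,F_Y,F_Z)$, combined with the monotonicity Lemmas \ref{lem-phi-order}--\ref{lem-chi-order} that transfer order on $(F_X,F_Y)$ to order on the generators. Throughout I will take the extremal generators $\low\mmphi,\up\mmphi,\low\mmpsi,\up\mmpsi$ to be those produced by the explicit extensions \eqref{eq-phi-ext} and \eqref{eq-chi-ext} applied to the extreme triples $(\low F_X,\low F_Y,F_Z)$ and $(\up F_X,\up F_Y,F_Z)$; the existence of genuinely extremal associated generators is guaranteed by \cite[Proposition 4]{OmSk}.

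First I would dispatch the formulaic items. Part (iii) is immediate from Proposition \ref{prop-maxmin-properties}(i): $F_U=F_XF_Z$ and $F_W=1-(1-F_Y)(1-F_Z)$ are componentwise increasing in $F_X$ and $F_Y$ respectively, so the infimum and supremum over the admissible triples are attained at the extreme triples. Part (vi) then follows from (iii) via $\low F_X\leq\up F_X$ and $\low F_Y\leq\up F_Y$, and (vii) is a direct rephrasing of the $p$-box definition given (iii) and (vi). Part (iv) is Proposition \ref{prop-maxmin-properties}(ii) specialised to the two extreme triples. For (i), Lemma \ref{lem-phi-order} applied to $\low F_X\leq\up F_X$ and Lemma \ref{lem-chi-order} applied to $\low F_Y\leq\up F_Y$ yield $\low\mmphi\leq\up\mmphi$ and $\low\mmpsi\leq\up\mmpsi$ for the canonical extensions, hence also for the extremal generators.

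The core step is (ii). The key preliminary extracted from (F1)--(F3) is that $\mmphi(u)\geq u$ and $\mmpsi(v)\leq v$ on $[0,1]$: the first follows from $\mmphi^*(u)\geq\mmphi^*(1)=1$; for the second, the three-case definition of $\mmpsi_*$ together with $\mmpsi_*(v)\geq\mmpsi_*(1)=1$ rules out the sub-case $v<\mmpsi(v)$ except at $v=1$, leaving $v\geq\mmpsi(v)$. Consequently both $\mmphi(u)-u$ and $v-\mmpsi(v)$ are nonnegative factors in
\[
\mmcop_{\mmphi,\mmpsi}(u,v)=uv+\min\{u(1-v),(\mmphi(u)-u)(v-\mmpsi(v))\},
\]
so the right-hand side is, for each fixed $(u,v)$, nondecreasing in $\mmphi(u)$ and nonincreasing in $\mmpsi(v)$. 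Applied pointwise with $\low\mmphi\leq\mmphi\leq\up\mmphi$ and $\low\mmpsi\leq\mmpsi\leq\up\mmpsi$, this yields the double inequality $\mmcop_{\low\mmphi,\up\mmpsi}\leq\mmcop_{\mmphi,\mmpsi}\leq\mmcop_{\up\mmphi,\low\mmpsi}$.

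Finally, (v) is Proposition \ref{prop-maxmin-properties}(iii) applied at each extreme triple: both common values reduce to $1/F_Z$, so all four compositions coincide wherever defined. For (viii) and (ix) I would not try to squeeze the conclusion from (ii) alone but use the shock-model interpretation directly: by Proposition \ref{prop-maxmin-properties}(v), $\mmcop_{\mmphi,\mmpsi}(F_U(x),F_W(y))$ equals the joint $H(x,y)=P(\max\{X,Z\}\leq x,\,\min\{Y,Z\}\leq y)$, and a brief case split on $x\leq y$ versus $x>y$ shows that $H(x,y)$ is pointwise nondecreasing in both $F_X$ and $F_Y$ (with $F_Z$ held fixed). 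This gives (viii) by comparing the two extreme triples and also gives (ix) since every admissible $H$ is then sandwiched between the extremes. The main obstacle is (ii), and specifically the initial extraction of the sign conditions $\mmphi\geq u$ and $\mmpsi\leq v$ from the opaque condition (F3) on $\mmpsi_*$; once these are in hand, the remainder is bookkeeping that threads Proposition \ref{prop-maxmin-properties}, Lemmas \ref{lem-phi-order}--\ref{lem-chi-order}, and the pointwise monotonicity established in (ii).
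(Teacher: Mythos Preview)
The paper does not actually prove this theorem: it is stated with the attribution \cite[Theorem 4]{OmSk} and no proof follows, so there is no in-paper argument to compare against. Your proposal is a self-contained and correct proof. The verification of the sign conditions $\mmphi(u)\geq u$ and $\mmpsi(v)\leq v$ from (F3) is the right hinge for part~(ii), and your monotonicity argument for $\mmcop_{\mmphi,\mmpsi}$ in the generators is sound. For (viii)--(ix), your direct computation of $H(x,y)$ and the case split $x\le y$ versus $x>y$ (giving $H=F_X(x)F_Z(x)$ in the first case and $H=F_X(x)\bigl[F_Z(x)-(F_Z(x)-F_Z(y))(1-F_Y(y))\bigr]$ in the second) indeed shows that $H$ is nondecreasing in both $F_X$ and $F_Y$, which is exactly what is needed. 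One cosmetic point: in your treatment of $\mmpsi_*$ you write that $v<\mmpsi(v)$ is ruled out ``except at $v=1$'', but at $v=1$ one has $\mmpsi(1)=1$, so equality holds there rather than strict inequality; the conclusion $\mmpsi(v)\le v$ is unaffected.
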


\vskip.5cm

\section{Bivariate imprecise reflected maxmin copulas}\label{sec:reflected-bi}

In this section we present one of our main results, the imprecise version of the bivariate  reflected maxmin copulas. Observe that reflected maxmin copulas were first introduced in \cite{KoOm} as a simplification of the maxmin copulas first introduced in \cite{OmRu}, and they are revisited in Subsection \ref{subsec:reflected}. The imprecise version of the maxmin copulas are introduced in \cite{OmSk} and revisited in Subsection \ref{subsec:imp:marmm}. In case of a conflicting notation of the two sources we prefer to use the notation of Subsection \ref{subsec:reflected}.

We assume that the distribution functions of $X$ and $Y$ are imprecise, i.e.\ they are all obtained via finitely additive measures and live in their $p$-boxes
\[
F_X\in\mathcal{F}_{(\underline{F}_X,\overline{F}_X)}\quad\mbox{and}\quad F_Y\in\mathcal{F}_{(\underline{F}_Y,\overline{F}_Y)}.
\]
However, for technical reasons we assume that the distribution function $F_Z$ is precise. Following Subsection \ref{subsec:reflected} we associate to any triple $(F_X,F_Y,F_Z)$ the corresponding generating functions $\varphi$ and $\chi$ satisfying the Conditions (F1)--(F3). Observe that the existence of the kind of generating functions satisfying the defining Relations \eqref{eq:defineRMM} and such that order relations on the respective generating functions $\varphi$ and $\chi$ are in accordance with order relations on the respective distribution functions $F_X$ and $F_Y$ was presented in Subsection \ref{subsec:imp:marmm} via Equations \eqref{eq-phi-ext} and \eqref{eq-chi-ext}. We now transform these generating functions into functions $f$ and $g$ using Equation \eqref{eq:rewrite}. In this sense functions $f$ and $g$ are defined indirectly via Equations \eqref{eq-phi-ext} and \eqref{eq-chi-ext} and satisfy 
Conditions (G1)--(G3). Moreover, the order is preserved for functions $f$ and reversed for functions $g$. We will say that they are associated to the triple $(F_X,F_Y,F_Z)$.

Based on the results presented in Subsection \ref{subsec:reflected} we can now define the imprecise version of the reflected maxmin copulas. Following also the ideas presented in Subsection \ref{subsec:imp:marmm} we introduce the family of copulas depending on two pairs of functions $\low{f} \leqslant \up f$ and $\low{g}\leqslant \up{g}$ satisfying Conditions (G1)--(G3). We let
	\begin{equation}\label{eq-reflected}
	\rmmsetcop = \{ \rmmcop_{f, g} \colon \low{f}\leqslant  f \leqslant  \up{f}, \low{g}\leqslant  g\leqslant  \up{g}\},
	\end{equation}
where all $f$ and $g$ also satisfy Conditions (G1)--(G3). We call this family an \emph{imprecise reflected maxmin copula}. We can expect a possible stochastic interpretation only if in this definition we let $\low f$ be the infimum of all functions $f$ associated to the triple $(\low F_X,\low F_Y,F_Z)$ and we let $\up f$ be the supremum of all functions $f$ associated to the triple $(\up F_X,\up F_Y,F_Z)$. Furthermore, let $\low g$ be the infimum of all functions $g$ associated to the triple $(\up F_X,\up F_Y,F_Z)$ and let $\up g$ be the supremum of all functions $g$ associated to the triple $(\low F_X,\low F_Y,F_Z)$.
Note that $\low g(x)=1-x-\up\mmpsi(1-x)$ and $\up g(x)=1-x-\low\mmpsi(1-x)$.


\begin{proposition}\label{prop8}
  For every $F_X\in\mathcal{F}_{(\underline{F}_X,\overline{F}_X)}$ and every $F_Y\in\mathcal{F}_{(\underline{F}_Y,\overline{F}_Y)}$ there exist functions $f$ and $g$ associated to the triple $(F_X,F_Y,F_Z)$ such that $\low{f}\leqslant f \leqslant\up{f}$ and  $\low{g}\leqslant g \leqslant\up{g}$.
\end{proposition}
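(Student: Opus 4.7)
The plan is to derive the associated RMM generators $f,g$ from the corresponding maxmin generators $\mmphi,\mmpsi$ via the rewriting in~\eqref{eq:rewrite}, and then to transfer the order statements already available for $\mmphi,\mmpsi$ (Lemmas~\ref{lem-phi-order} and~\ref{lem-chi-order}) through this transformation. The crucial observation is that the map $\mmphi\mapsto f=\mmphi-\mathrm{id}$ is monotone, while the map $\mmpsi\mapsto g$ given by $g(x)=1-x-\mmpsi(1-x)$ is anti-monotone; this is precisely what explains why $\low g$ and $\up g$ are defined using the \emph{opposite} corners of the $p$-boxes than $\low f$ and $\up f$, and it is consistent with the identity $\low g(x)=1-x-\up{\mmpsi}(1-x)$ recorded just before the proposition.

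First I would fix $F_X\in\mathcal{F}_{(\low F_X,\up F_X)}$ and $F_Y\in\mathcal{F}_{(\low F_Y,\up F_Y)}$, and apply~\eqref{eq-phi-ext} and~\eqref{eq-chi-ext} to produce maxmin generators $\mmphi,\mmpsi$ associated to $(F_X,F_Y,F_Z)$; both satisfy Conditions~(F1)--(F3) by the construction recalled in Subsection~\ref{subsec:imp:marmm}. Then I would define $f$ and $g$ through~\eqref{eq:rewrite}. The Claim in Subsection~\ref{subsec:reflected} guarantees that $f,g$ then satisfy Conditions~(G1)--(G3), so they are legitimate RMM generators associated to $(F_X,F_Y,F_Z)$. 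For the bounds on $f$, apply~\eqref{eq-phi-ext} to the corner triples $(\low F_X,\low F_Y,F_Z)$ and $(\up F_X,\up F_Y,F_Z)$ to obtain generators $\mmphi_\ell$ and $\mmphi_u$; Lemma~\ref{lem-phi-order} then yields $\mmphi_\ell\leqslant\mmphi\leqslant\mmphi_u$. Subtracting the identity preserves these inequalities, so $f_\ell\leqslant f\leqslant f_u$, where $f_\ell=\mmphi_\ell-\mathrm{id}$ and $f_u=\mmphi_u-\mathrm{id}$. Since $f_\ell$ and $f_u$ are themselves associated to the respective corner triples, and $\low f,\up f$ are by definition the pointwise infimum and supremum over all such associated functions, we obtain $\low f\leqslant f_\ell\leqslant f\leqslant f_u\leqslant\up f$.

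The argument for $g$ runs along the same lines but uses the order flip. Applying~\eqref{eq-chi-ext} to the same two corner triples and invoking Lemma~\ref{lem-chi-order} yields $\mmpsi_\ell\leqslant\mmpsi\leqslant\mmpsi_u$. Setting $g_\ell(x)=1-x-\mmpsi_u(1-x)$ and $g_u(x)=1-x-\mmpsi_\ell(1-x)$ reverses the direction of the inequality to $g_\ell\leqslant g\leqslant g_u$. By construction $g_\ell$ is associated to $(\up F_X,\up F_Y,F_Z)$ and $g_u$ to $(\low F_X,\low F_Y,F_Z)$, which is exactly the corner-triple assignment used to define $\low g$ and $\up g$ in the paragraph preceding the proposition, so $\low g\leqslant g_\ell\leqslant g\leqslant g_u\leqslant\up g$. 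The only real obstacle is bookkeeping: keeping the order-reversal for $g$ consistent with the swapped corner definition of $\low g,\up g$, and checking that the specific generators produced by the explicit constructions~\eqref{eq-phi-ext}--\eqref{eq-chi-ext} at the corner triples are comparable with the pointwise infimum and supremum over \emph{all} associated generators --- which is automatic once one recalls that both extreme constructions themselves belong to the set over which the infimum and supremum are taken.
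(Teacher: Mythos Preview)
Your proof is correct and follows essentially the same route as the paper's: invoke Lemmas~\ref{lem-phi-order} and~\ref{lem-chi-order} to order the maxmin generators $\mmphi,\mmpsi$, then push this order through the rewriting~\eqref{eq:rewrite}, noting that the map $\mmpsi\mapsto g$ reverses order. The paper's own proof is a two-line sketch that merely points to these lemmas and to~\eqref{eq:rewrite}; your version spells out the bookkeeping (in particular the corner-triple assignments and the comparison with the infimum/supremum defining $\low f,\up f,\low g,\up g$) that the paper leaves implicit.
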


\begin{proof}
  By Lemmas \ref{lem-phi-order}\&\ref{lem-chi-order} we get $\underline{\varphi} \leqslant\overline{\varphi}$ and $\underline{\chi} \leqslant\overline{\chi}$. Using \eqref{eq:rewrite} we get easily the desired result for $f$ and $g$.
\end{proof}

We need another fact, namely that operator $\widehat{\cdot}$ reverses the order. Let us combine all these facts in Equation \eqref{eq:f_g_product} to get for the functions $\underline{f}, \underline{g}, \overline{f}$,and $\overline{g},$
\begin{equation}\label{eq:f_g_bar}
  \begin{split}
     \underline{f}(\underline{F}_XF_Z)=\underline{F}_X\widehat{F}_Z,\quad\quad &\overline{f}(\overline{F}_XF_Z)=\overline{F}_X\widehat{F}_Z, \\
     \underline{g}(\widehat{\overline{F}}_Y\widehat{F}_Z)=\widehat{\overline{F}}_YF_Z, \quad\quad & \overline{g}(\widehat{\underline{F}}_Y\widehat{F}_Z)=\widehat{\underline{F}}_YF_Z,
  \end{split}
\end{equation}
whenever $\underline{F}_XF_Z>0$, $\overline{F}_XF_Z>0$, $\widehat{\underline{F}}_Y \widehat{F}_Z>0$, $\widehat{\overline{F}}_Y \widehat{F}_Z>0$. Here and in the sequel we denote $\widehat{{\low F}}=1-\low F$ and $\widehat{{\up F}}=1-\up F$.
Out of these four equations let us show, say, the southwest one. The others go similarly. When seeking the infimum of the left hand side of the second equation of \eqref{eq:f_g_product}, function $g$ reaches $\underline{g}$, the infimum of the value of $\widehat{F}_Y$ becomes $\widehat{\overline{F}}_Y$ and $\widehat{F}_Z$ remains unchanged; similar considerations apply to the right hand side of the equation.

Before we summarize these observations let us also introduce $H^\sigma(x,y)= P(U\leqslant x, W>y)$ which is playing the role of the combined joint distribution-survival function we need in relation with RMM copulas.
\begin{theorem}[Properties of the imprecise RMM copulas]
 It holds that:
  \begin{enumerate}[(i)]
    \item $\underline{f}\leqslant \overline{f}, \underline{g} \leqslant \overline{g}$.
    \item $\rmmcop_{\underline{f},\underline{g}} \geqslant \rmmcop_{f,g} \geqslant \rmmcop_{\overline{f}, \overline{g}}$.
    \item \[\begin{split}
               \underline{F}_U=\underline{F}_X F_Z,\quad & \overline{F}_U=\overline{F}_XF_Z,  \\
               \widehat{\underline{F}}_W= \widehat{\underline{F}}_Y \widehat{F}_Z,\quad  & \widehat{\overline{F}}_W= \widehat{\overline{F}}_Y \widehat{F}_Z.
            \end{split} \]
    \item $\underline{F}_U\leqslant F_U\leqslant \overline{F}_U, \quad \widehat{\underline{F}}_W\geqslant \widehat{F}_W\geqslant \widehat{\overline{F}}_W$.
    \item \begin{equation*}
            \begin{split}
               \underline{f}(\underline{F}_U)  =\underline{F}_X - \underline{F}_U, \quad
               &\overline{f}(\overline{F}_U)= \overline{F}_X - \overline{F}_U, \\
              \underline{g}(\widehat{\overline{F}}_W)= \widehat{\overline{F}}_Y-\widehat{\overline{F}}_W,\quad &\overline{g}(\widehat{\underline{F}}_W)= \widehat{\underline{F}}_Y-\widehat{\underline{F}}_W,\end{split}
          \end{equation*}
          whenever $\underline{F}_U$, $\overline{F}_U$, $\widehat{\underline{F}}_W$, $\widehat{\overline{F}}_W>0$.
    \item 
            ${\underline f}^*({\underline F_U}) {\underline g}^*({ \widehat{\overline F}_W})={\underline f}^*({\underline F_U}) {\overline g}^*({ \widehat{\underline F}_W})={\overline f}^*({\overline F_U}) {\underline g}^*({\widehat{\overline F}_W}) = {\overline f}^*({\overline F_U}) {\overline g}^*({ \widehat{\underline F}_W})=1$, if 
$\underline{F}_U$, $\overline{F}_U$, $\widehat{\underline{F}}_W$, $\widehat{\overline{F}}_W>0$.
    \item $H^\sigma(x,y)=F_X(x)\widehat{F}_Y(y)\max\{0,F_Z(x)- F_Z(y)\}=\rmmcop_{f,g}(F_U(x),\widehat{F}_W(y))$.
    \item $\underline{H}^\sigma(x,y)=\rmmcop_{\underline{f},\underline{g}} (\underline{F}_U(x),\widehat{\overline{F}}_W(y))$ and $\overline{H}^\sigma(x,y)=\rmmcop_{\overline{f}, \overline{g}}(\overline{F}_U(x), \widehat{\underline{F}}_W(y))$.
    \item $\rmmcop_{\underline{f},\underline{g}} (\underline{F}_U(x), \widehat{\overline{F}}_W(y)) \leqslant \rmmcop_{f,g} (F_U(x), \widehat{F}_W(y)) \leqslant \rmmcop_{\overline{f}, \overline{g}} (\overline{F}_U(x), \widehat{\underline{F}}_W(y))$.
  \end{enumerate}
\end{theorem}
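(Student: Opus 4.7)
The plan is to establish the nine claims in order, relying almost entirely on two ingredients: the factorization identities $F_U=F_XF_Z$ and $\widehat{F}_W=\widehat{F}_Y\widehat{F}_Z$ (and the corresponding relations \eqref{eq:f_g_bar} for the generators of the bounds), and the order-preservation/reversal properties established in Proposition~\ref{prop8} together with the observation that $\widehat{\cdot}$ reverses the pointwise order. Part (i) is then immediate from Proposition~\ref{prop8}. Part (ii) follows from (i) by inspecting the formula $\rmmcop_{f,g}(x,y)=\max\{0,xy-f(x)g(y)\}$: since $f,g\geqslant 0$ and $\max\{0,xy-\cdot\}$ is decreasing in the subtracted term, larger $f$ and $g$ yield a smaller copula. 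Part (iii) is read off from \eqref{eq:f_g_bar} applied to the bounds, with the survival-order reversal handled by $\widehat{\cdot}$; part (iv) is then immediate.

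For part (v) I would substitute $\underline{F}_X\widehat{F}_Z=\underline{F}_X-\underline{F}_XF_Z=\underline{F}_X-\underline{F}_U$ (and analogously for the other three equations) into \eqref{eq:f_g_bar}. Part (vi) is a direct calculation: by (v),
\[
\underline{f}^*(\underline{F}_U)=\frac{\underline{F}_X\widehat{F}_Z}{\underline{F}_XF_Z}=\frac{\widehat{F}_Z}{F_Z},\qquad \underline{g}^*(\widehat{\overline{F}}_W)=\frac{\widehat{\overline{F}}_YF_Z}{\widehat{\overline{F}}_Y\widehat{F}_Z}=\frac{F_Z}{\widehat{F}_Z},
\]
so their product is $1$, and the three other combinations are identical after swapping $\underline{\cdot}\leftrightarrow\overline{\cdot}$.

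Part (vii) is the heart of the statement. Using $U=\max\{X,Z\}$ and $W=\min\{Y,Z\}$, the event $\{U\leqslant x,W>y\}$ equals $\{X\leqslant x\}\cap\{Y>y\}\cap\{y<Z\leqslant x\}$, whose probability under independence is $F_X(x)\widehat{F}_Y(y)\max\{0,F_Z(x)-F_Z(y)\}$. For the second equality I would compute, using the generator identities $f(F_U(x))=F_X(x)\widehat{F}_Z(x)$ and $g(\widehat{F}_W(y))=\widehat{F}_Y(y)F_Z(y)$:
\[
F_U(x)\widehat{F}_W(y)-f(F_U(x))g(\widehat{F}_W(y))=F_X(x)\widehat{F}_Y(y)\bigl(F_Z(x)-F_Z(y)\bigr),
\]
and then take the positive part. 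Part (viii) then follows because the product form in (vii) is minimized by replacing $F_X$ with $\underline{F}_X$ and $\widehat{F}_Y$ with $\widehat{\overline{F}}_Y$ (and dually for the maximum), and reapplying the algebra of (vii) to the extremal generators rewrites these products as the claimed copulas evaluated at the extremal arguments. Part (ix) is then immediate from (vii) and (viii).

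The main obstacle is part (vii), specifically the identification of the algebraic rearrangement with the probabilistic expression: one must carefully track which $f$ and $g$ are being used and verify that the identity $f(F_U(x))=F_X(x)\widehat{F}_Z(x)$ holds with $x$ being a common argument (this is exactly where the extension recipes \eqref{eq-phi-ext}--\eqref{eq-chi-ext}, transported via \eqref{eq:rewrite}, are needed to cover the points where $F_U$ or $\widehat{F}_W$ vanishes or where the generator values are not uniquely determined by \eqref{eq:f_g_product}). Once the pointwise identity is secured for every $x,y$, the remaining claims are bookkeeping over the $p$-box bounds.
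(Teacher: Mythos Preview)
Your proposal is correct and follows essentially the same approach as the paper: Proposition~\ref{prop8} for (i), the monotonicity of $\max\{0,xy-f(x)g(y)\}$ in the product $fg$ for (ii), the factorizations $F_U=F_XF_Z$, $\widehat{F}_W=\widehat{F}_Y\widehat{F}_Z$ for (iii)--(iv), relations \eqref{eq:f_g_bar} rewritten for (v), a direct computation from (iii) and (v) for (vi), the event decomposition $\{U\leqslant x,W>y\}=\{X\leqslant x,Y>y,y<Z\leqslant x\}$ together with the algebraic expansion of $\rmmcop_{f,g}(F_U(x),\widehat{F}_W(y))$ for (vii), minimizing the resulting product over the $p$-boxes for (viii), and then (ix) from (vii) and (viii). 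The paper's proof proceeds in exactly this order with the same justifications.
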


\begin{proof}
  \emph{(i)} follows by Proposition \ref{prop8}.
  \emph{(ii)}: Choose arbitrary $u,v\in[0,1]$. For nonnegative functions with $f(u)\leqslant f'(u)$ and $g(v)\leqslant g'(v)$ we get easily $f(u)g(v) \leqslant f'(u)g'(v)$, so that $uv- f(u)g(v) \geqslant uv-f'(u)g'(v)$, consequently $\rmmcop_{f,g} \geqslant \rmmcop_{f',g'}$, and the desired conclusion follows. \emph{(iii)}: Clear. Now, we have $F_U=F_XF_Z$ by definition, so that $F_X\leqslant F'_X$ implies $F_U=F_XF_Z\leqslant F'_XF_Z=F'_U$ yielding the first two relations of \emph{(iv)}. The other two relations follow from the fact that $F_Y\leqslant F'_Y$ implies $\widehat{F}_W= \widehat{F}_Y \widehat{F}_Z\geqslant \widehat{F'}_Y\widehat{F}_Z= \widehat{F'}_W$. 
  \emph{(v)}: These are Relations \eqref{eq:f_g_bar} rewritten. \emph{(vi)}: Follows directly by using \emph{(iii)} and \emph{(v)}. \emph{(vii)}: Write
  \begin{equation}\label{eq:H}
    \begin{split}
       H^\sigma(x,y) & =P(\max(X,Z)\leqslant x,\min(Y,Z)>y) \\
         & =P(X\leqslant x, Y>y, y<Z\leqslant x) \\
         & =F_X(x)\widehat{F}_Y(y)(F_Z(x)- F_Z(y))
    \end{split}
  \end{equation}
  if $x>y$ and zero otherwise. On the other hand
  \[
    \rmmcop_{f,g}(F_U(x),\widehat{F}_W(y))=F_X(x)F_Z(x)\widehat{F}_Y(y) \widehat{F}_Z(y)-F_X(x)\widehat{F}_Z(x) \widehat{F}_Y(y)F_Z(y)
  \]
  whenever this expression is positive, and zero otherwise; and this amounts to the same as in \eqref{eq:H}.
  \emph{(viii)}: Compute infimum of the leftmost side and on the rightmost side of \eqref{eq:H} to get
  \[
    \underline{H}^\sigma(x,y)=\underline{F}_X(x)\widehat{\overline{F}}_Y(y) \max\{0,F_Z(x)- F_Z(y)\}
  \]
  which implies the first desired relation using \emph{(iii)} and \emph{(v)}. The second one goes similarly.
  \emph{(ix)}: Follows from \emph{(vii)} and \emph{(viii)}.
\end{proof}

\textbf{Remark.} Observe that, somewhat surprisingly, Relation \emph{(ix)} holds in spite of the fact that $\rmmcop_{\underline{f}, \underline{g}} \geqslant \rmmcop_{\overline{f}, \overline{g}}$ as implied by \emph{(ii)}.

\vskip.3cm

\textbf{Example.} 
Suppose the occurrence of endogenous shocks in the model is governed by independent Poisson processes and exogenous shock comes at a fixed future time. Then $X, Y$ and $Z$ are independent random variables with distribution functions:
	\begin{align*}
		F_X(x) & = 1-e^{-\lambda x}, \text{ for } x\geqslant 0 \text{ and 0 for } x<0; \\
		F_Y(y) & = 1-e^{-\mu y}, \text{ for } y\geqslant 0 \text{ and 0 for } y<0; \\
		F_Z(x) & = \begin{cases}
			0 & \text{ if } x< 1; \\
			1 & \text{ if } x\geqslant 1, \\
		\end{cases}
	\end{align*}
	where $\lambda$ and $\mu$ are some positive constants, actually they are the parameters of the underlying Poisson processes. We are normalizing the parameters so that shock $Z$ comes at time 1. Further, the distribution functions of $U = \max\{ X, Z \}$ and $W= \min\{ Y, Z \}$ are equal to
	\begin{align*}
		F_U(x) & = \begin{cases}
			1-e^{-\lambda x} &  \text{ if } x \geqslant 1 ; \\
			0 & \text{ elsewhere};
		\end{cases} \\
		F_W(y) & = \begin{cases}
			0 & \text{ if } y < 0 ; \\
		 	1-e^{-\mu y} & \text{ if } 0\leqslant y< 1 ; \\
		 	1 & \text{ if } 1 \leqslant y .
		\end{cases}		
	\end{align*}
	Reflected maxmin copula $\rmmcop_{f,g}$ modeling the dependence between $U$ and $W$ is generated by the functions
	\begin{align*}
		f(u) & = \max\{1-e^{-\lambda}-u, 0\} , \\
		g(w) & = \max\{e^{-\mu}-w, 0\} 
	\end{align*}
	for $u, w \in (0, 1]$, and $f(0)=g(0)=0$. It is equal to
	$$ \rmmcop_{f,g}(u, w) = \begin{cases}
			0 & \begin{array}{l} \text{if } e^{-\mu}u + (1 - e^{-\lambda})w \leqslant e^{-\mu}(1 - e^{-\lambda}) ; \end{array} \\
		 	e^{-\mu}u + (1 - e^{-\lambda})w - e^{-\mu}(1 - e^{-\lambda}) & \begin{array}{l} \text{if } e^{-\mu}u + (1 - e^{-\lambda})w > e^{-\mu}(1 - e^{-\lambda}), \\
			u \leqslant 1-e^{-\lambda}, w \leqslant e^{-\mu} ; \end{array} \\
		 	uw & \begin{array}{l} \text{if } u > 1-e^{-\lambda} \text{ or } w > e^{-\mu}. \end{array}
		\end{cases}		$$
		Suppose now that we cannot assume precisely given parameters, but instead we consider the $p$-boxes $(\low F_X, \up F_X)$ and $(\low F_Y, \up F_Y)$, where $\low F_X(x)$ is an exponential distribution with parameter $\lambda_1$ and $\up F_X(x)$ with some parameter $\lambda_2 > \lambda_1$. It is immediate that $\low F_X \leqslant \up F_X$ holds. Similarly, let $\low F_Y$ and $\up F_Y$ be exponential with parameters $\mu_1 < \mu_2$ respectively. It is easy to check that 	
		\begin{align*}
		\underline{f}(u) & = \max\{1-e^{-\lambda_1}-u, 0\} , \\
		\overline{f}(u) & = \max\{1-e^{-\lambda_2}-u, 0\} , \\
		\underline{g}(w) & = \max\{e^{-\mu_2}-w, 0\} , \\
		\overline{g}(w) & = \max\{e^{-\mu_1}-w, 0\} 
	\end{align*}
for $u, w \in (0, 1]$ and $0$ otherwise are the generating functions of copulas $\rmmcop_{\underline{f},\underline{g}}$ and $\rmmcop_{\overline{f},\overline{g}}$. Notice that the order of functions $g$ is reversed with respect to the order of the parameters $\mu$. 
In Figure \ref{fig-1} we give the 3D graphs of copulas $\rmmcop_{\underline{f},\underline{g}}$ and $\rmmcop_{\overline{f},\overline{g}}$ for the parameters $\lambda_1= \mu_1 = 1$ and $\lambda_2= \mu_2 = 2$, where the relation $\rmmcop_{\underline{f},\underline{g}} \geqslant \rmmcop_{\overline{f},\overline{g}}$ can be seen. 

\begin{figure}[h]
            \includegraphics[width=6cm]{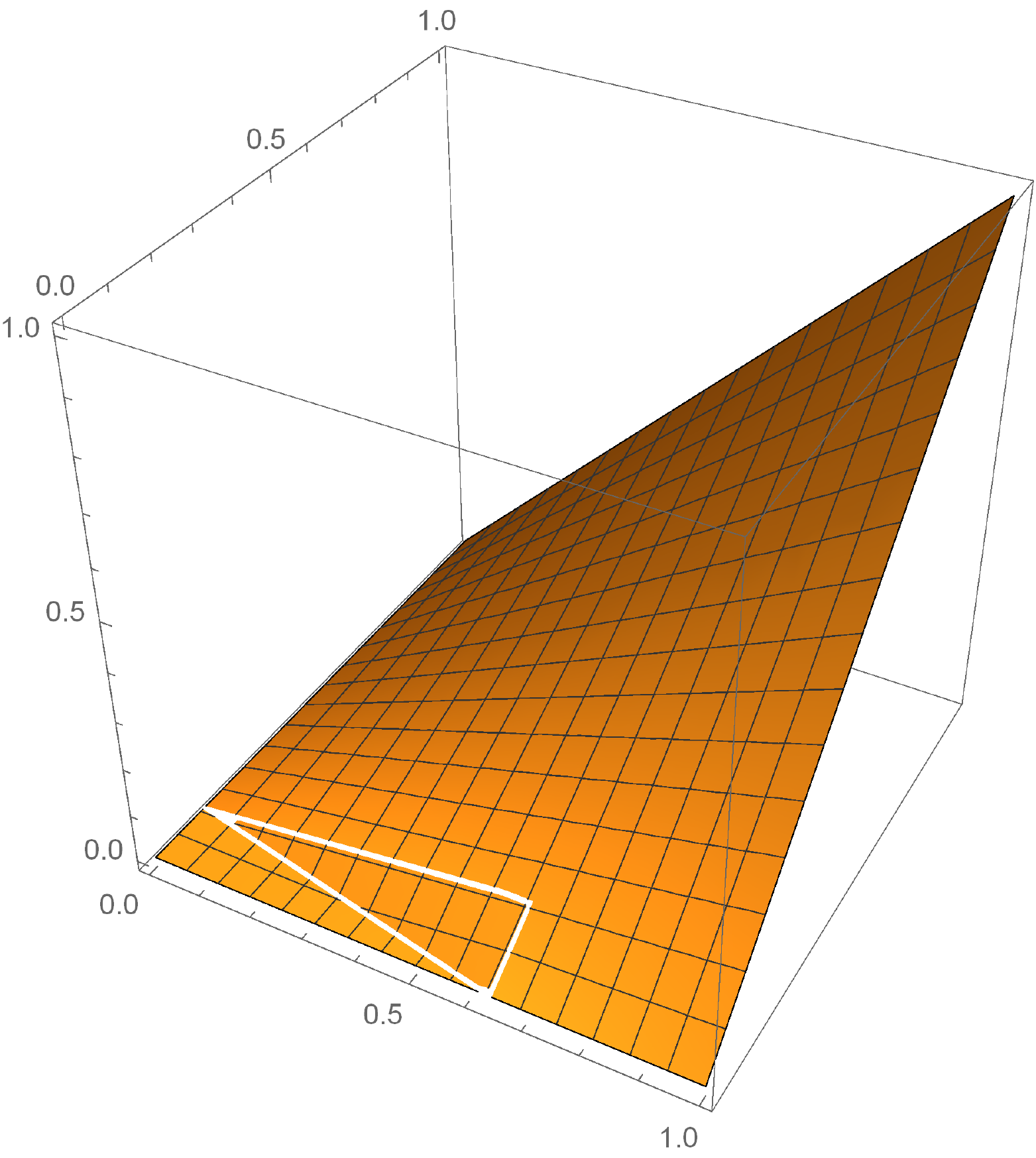} \hfil \includegraphics[width=6cm]{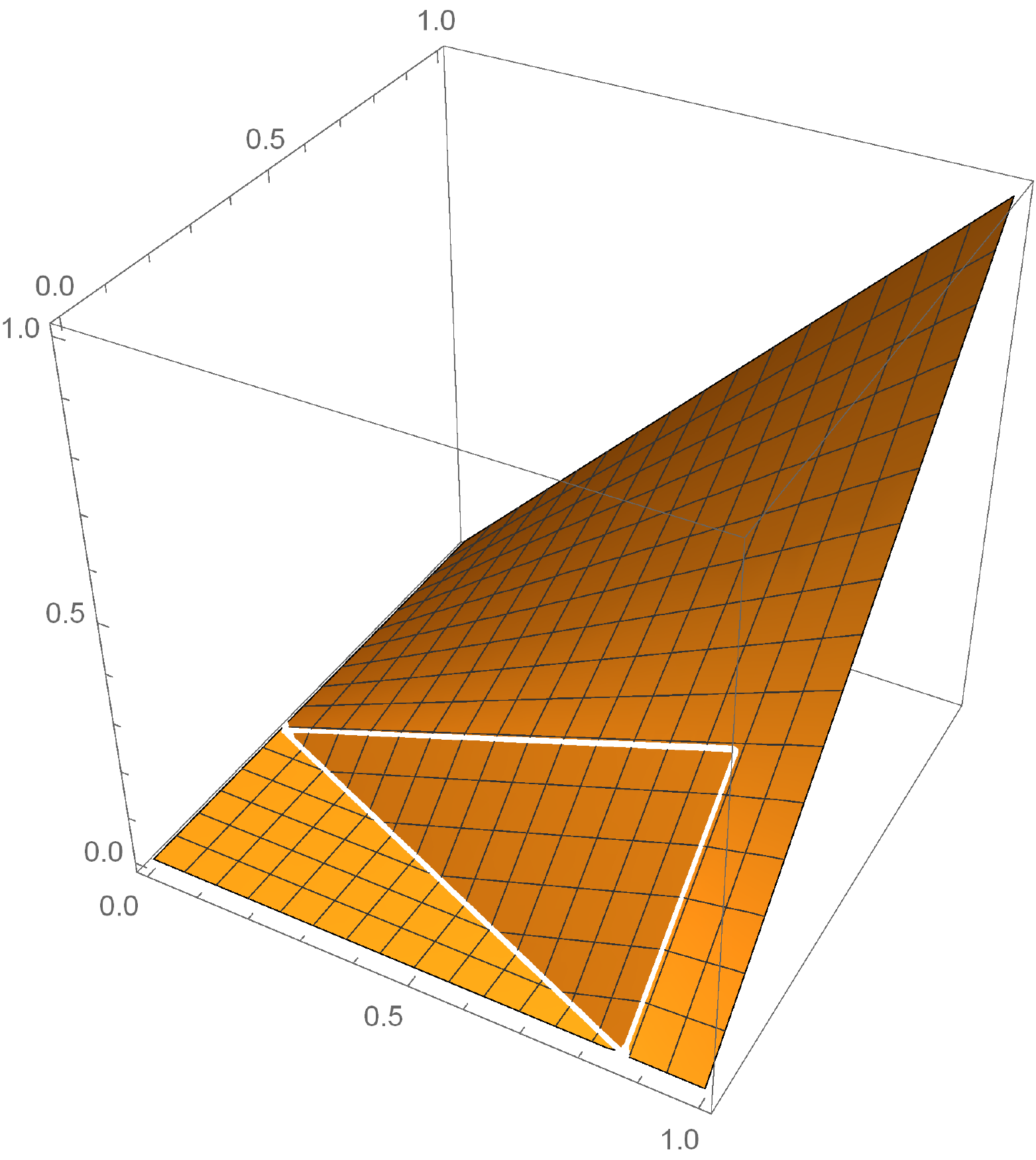} 
            \caption{3D graphs of copulas $\rmmcop_{\underline{f},\underline{g}}$ and $\rmmcop_{\overline{f},\overline{g}}$ for the parameters $\lambda_1= 1, \mu_2 = 2$ (left) and $\lambda_2= 2, \mu_1 = 1$ (right).} \label{fig-1}
\end{figure}

\vskip.5cm

\section{Multivariate imprecise Marshall's copulas}\label{subsec:imp:marsh}

In this section we extend the bivariate imprecise Marshall's copulas described in Subsection \ref{subsec:imp:marmm} to the multivariate case.
Start by revisiting the precise case (see for example \cite{DuGiMa} and references therein). Let $X_1,\ldots,X_n,Z$ be independent variables with respective distribution functions $F_1,\ldots,F_n,F_Z$. Define
\begin{equation}\label{eq:maxmax}
  U_i=\max\{X_i,Z\},\ \ \mbox{for}\ i=1,\ldots,n.
\end{equation}
Then for the respective distribution functions $G_1, \ldots, G_n$ of $U_1, \ldots, U_n$ we have clearly
\[
    G_i=F_iF_Z\quad\mbox{for}\quad i=1,2,\ldots,n.
\]
Denote by $H(x_1,\ldots,x_n)$ the joint distribution function of the random vector $(U_1,U_2,\ldots,U_n)$. Following the notation of the bivariate case presented in Subsection \ref{subsec:marsall} we introduce the generating functions so that they satisfy Conditions (F1)--(F3) and the defining relations
\[
    \varphi_i(G_i)=F_i\quad\mbox{if}\quad G_i>0\quad \mbox{for}\quad i=1,2,\ldots,n.
\]
Note that these relations do not determine the generating functions uniquely. Recall that the joint distribution function equals
\[
    H(x_1,\ldots,x_n)= F_1(x_1)\cdots F_n(x_n) F_Z(\min\{x_1,\ldots,x_n\}),
\]
and that
\begin{equation}\label{minn}
  C(u_1,\ldots,u_n)=\varphi_1(u_1)\cdots \varphi_n(u_n) \min\left\{\dfrac{u_1}{\varphi_1(u_1)},\ldots, \dfrac{u_n}{\varphi_n(u_n)}\right\},
\end{equation}
if all $\varphi_i(u_i)>0$ and zero otherwise. We will later introduce the notation $\marcop_{\pmb{\varphi}}$ for this copula. After a straightforward computation, one concludes that $C$ is a copula such that
\begin{equation}\label{eq:sklarn}
  H(x_1,\ldots,x_n)=C(G_1(x_1),\ldots,G_n(x_n)).
\end{equation}
Observe that we have thus extended an alternative version of the usually preferred Marshall's formula from the bivariate case (we presented both versions in Subsection \ref{subsec:marsall}) to the $n$-variate case.

In the imprecise setting we need to be more careful. From now on in this subsection all our distribution functions are assumed to come from a finitely-additive probability space meaning that they are monotone only. So, the random variables representing endogenous shocks are assumed to be given by distributions $F_i\in \mathcal{F}_{(\underline{F}_i, \overline{F}_i)}$ for $i=1,2,\ldots, n$, and by the Marshall's assumption \eqref{eq:maxmax} the random variables corresponding to the lives of the components satisfy $G_i=F_iF_Z$  for $i=1,2,\ldots, n$. For any choice of marginal distributions we define the joint distribution function $H(x_1,\ldots, x_n) = F_1(x_1)\cdots F_n(x_n) F_Z(\min\{x_1,\ldots,x_n\})$ by analogy with the above. This implies that the minimal and maximal joint distribution functions
\[\begin{split}
     \low H(x_1,\ldots, x_n) & =\min\{ H(x_1,\ldots, x_n)~|~F_i\in \mathcal{F}_{(\underline{F}_i, \overline{F}_i)}~\mbox{for}~i=1,2,\ldots, n\}\ \mbox{and} \\
     \up H(x_1,\ldots, x_n)  & =\max\{ H(x_1,\ldots, x_n)~|~F_i\in \mathcal{F}_{(\underline{F}_i, \overline{F}_i)}~\mbox{for}~i=1,2,\ldots, n\}
  \end{split}
\]
are clearly equal to
\begin{equation}\label{eq:H6.1}
  \begin{split}
     \low H(x_1,\ldots, x_n) & =\low F_1(x_1)\cdots \low F_n(x_n) F_Z(\min\{x_1,\ldots,x_n\})\ \mbox{and} \\
     \up H(x_1,\ldots, x_n)  & =\up F_1(x_1)\cdots\up F_n(x_n) F_Z(\min\{x_1,\ldots,x_n\}).
  \end{split}
\end{equation}
\vskip 2mm
We define the corresponding generating functions $\varphi_i$ using Equation \eqref{eq-phi-ext} in which we are consecutively replacing $F_X$ by $F_i$ for $i=1,2,\ldots,n$. By Proposition \ref{prop-marshall-properties}\emph{(ii)} we have
\[
    \varphi_i(G_i)=F_i,\quad\mbox{if}\quad G_i>0, \quad\mbox{for}\quad i=1,2,\ldots,n,
\]
in agreement with the defining relations above. By Lemma \ref{lem-phi-order} we deduce that
\[
    F_i'\leqslant F_{i}\quad\mbox{implies}\quad\varphi'_i\leqslant\varphi_i.
\]

Introduce the vectors $\underline{\pmb{\varphi}}= (\underline{\varphi}_1, \underline{\varphi}_2, \ldots, \underline{\varphi}_n)$, $\overline{\pmb{\varphi}}= (\overline{\varphi}_1,\overline{\varphi}_2, \ldots, \overline{\varphi}_n)$ and $\pmb{\varphi}= (\varphi_1,\varphi_2, \ldots, \varphi_n)$
. For the copula of Equation \eqref{minn} introduce notation
\[
    \marcop_{\pmb{\varphi}}(\pmb{u}) =\prod_{i=1}^{n}\varphi_i(u_i) \min_{i=1,\ldots,n}\left\{\frac{u_i}{\varphi_i(u_i)}\right\}
\]
if $\varphi_i(u_i)>0$ for all $i=1,2,\ldots,n$, and zero otherwise. We rewrite also the Equation \eqref{eq:sklarn} into
\begin{equation}\label{sklarn'}
  H(x_1,\ldots,x_n)= \marcop_{\pmb{\varphi}}(G_1(x_1),\ldots, G_n(x_n)).
\end{equation}
Given two vectors of functions $\underline{\pmb{\varphi}}\leqslant \overline{\pmb{\varphi}}$ (here and in what follows the relation ``less than or equal to'' is meant componentwise) such that each of their components satisfies Conditions (P1)--(P3), we let
\[
    \marsetcop
    =\{\marcop_{\pmb{\varphi}}\colon \underline{\pmb{\varphi}}\leqslant\pmb{\varphi}\leqslant \overline{\pmb{\varphi}}\},
\]
where each component of $\pmb{\varphi}$ also satisfies Conditions (P1)--(P3), and call this set of copulas an \emph{$n$-variate imprecise Marshall's copula}.

Using the ideas of Subsection \ref{subsec:imp:marmm} let $\low \varphi_i$ respectively $\up{\varphi}_i$ be the minimal respectively the maximal function satisfying Conditions (P1)--(P3) and
\[
    \underline{\varphi}_i(\underline{G}_i)=\underline{F}_i\ \ \mbox{if}\ \ \underline{G}_i>0\quad \mbox{respectively}\quad \overline{\varphi}_i(\overline{G}_i) = \overline{F}_i\ \ \mbox{if}\ \ \overline{G}_i>0
\]
for $i=1,2,\ldots,n$. Define also functions ${\underline \varphi_i}^*$ and ${\overline \varphi_i}^*$ as in Condition (P3).

Let us summarize.

\begin{theorem}[Properties of multivariate imprecise Marshall's copulas]\label{thm:marshall multi}
	In the situation described above we have:
  \begin{enumerate}[(i)]
    \item $\underline{\pmb{\varphi}}
        \leqslant \overline{\pmb{\varphi}}$.
    \item $\marcop_{\underline{\pmb{\varphi}}}\leqslant \marcop_{\pmb{\varphi}}\leqslant \marcop_{\overline{\pmb{\varphi}}}$.
    \item \[G_i=F_iF_Z,\quad \underline{G}_i=\underline{F}_iF_Z,\quad \overline{G}_i=\overline{F}_iF_Z, \] for $i=1,2,\ldots,n$.
    \item \[\begin{split}
               \varphi_i(G_i)=F_i\quad &\mbox{if}\quad G_i>0,  \\
               \underline{\varphi}_i(\underline{G}_i)=\underline{F}_i\quad & \mbox{if}\quad \underline{G}_i>0, \\
               \overline{\varphi}_i(\overline{G}_i)=\overline{F}_i\quad & \mbox{if}\quad \overline{G}_i>0,
            \end{split}\] for $i=1,2,\ldots,n$.
    \item $\marcop_{\underline{\pmb{\varphi}}}(\underline{G}_1(x_1),\ldots, \underline{G}_n(x_n)) \leqslant \marcop_{\pmb{\varphi}} (G_1(x_1),\ldots, G_n(x_n)) \leqslant \marcop_{\overline{\pmb{\varphi}}} (\overline{G}_1(x_1),\ldots, \overline{G}_n(x_n))$.
    \item \[\begin{split}
               H(x_1,\ldots,x_n) & = \marcop_{\pmb{\varphi}}(G_1(x_1),\ldots, G_n(x_n)), \\
               \underline{H}(x_1,\ldots,x_n)  & = \marcop_{\underline{\pmb{\varphi}}}(\underline{G}_1(x_1),\ldots, \underline{G}_n(x_n)), \\
               \overline{H}(x_1,\ldots,x_n)  & = \marcop_{\overline{\pmb{\varphi}}}(\overline{G}_1(x_1),\ldots, \overline{G}_n(x_n)).
            \end{split}
        \]
        \item For all $i,j=1, \ldots, n$ we have ${\underline \varphi_i}^*({\underline G_i}) ={\overline \varphi_j}^*({\overline G_j})$ if ${\underline G_i}, {\overline G_j} >0$.
  \end{enumerate}
\end{theorem}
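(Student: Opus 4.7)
The plan is to prove the items in sequence so each one can leverage the earlier facts, with the only genuinely nontrivial step being the monotonicity of $\marcop_{\pmb{\varphi}}$ in its generating tuple. I would start with \emph{(iii)}: since $U_i=\max\{X_i,Z\}$ with $X_i$ and $Z$ independent, one has $G_i=F_iF_Z$ by Proposition~\ref{prop-marshall-properties}(i), and because $F_Z$ is precise, taking pointwise infimum (respectively supremum) of $F_i$ over its $p$-box produces $\underline{G}_i=\underline{F}_iF_Z$ (respectively $\overline{G}_i=\overline{F}_iF_Z$). Part \emph{(iv)} is then built in: the generators $\varphi_i$, $\underline{\varphi}_i$, $\overline{\varphi}_i$ are constructed via the extension~\eqref{eq-phi-ext} with $F_X$ replaced by $F_i$, $\underline{F}_i$, $\overline{F}_i$ respectively, and this extension satisfies the defining relation of Proposition~\ref{prop-marshall-properties}(ii). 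Part \emph{(i)} follows by applying Lemma~\ref{lem-phi-order} coordinatewise to $\underline{F}_i\leqslant\overline{F}_i$.

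For \emph{(vii)}, a direct computation suffices: by \emph{(iv)} and \emph{(iii)}, whenever $\underline{G}_i(x)>0$,
\[
\underline{\varphi}_i^*(\underline{G}_i(x))=\frac{\underline{\varphi}_i(\underline{G}_i(x))}{\underline{G}_i(x)}=\frac{\underline{F}_i(x)}{\underline{F}_i(x)F_Z(x)}=\frac{1}{F_Z(x)},
\]
and the analogous identity $\overline{\varphi}_j^*(\overline{G}_j(x))=1/F_Z(x)$ holds whenever $\overline{G}_j(x)>0$, so all such expressions coincide. For \emph{(vi)}, I would substitute \emph{(iv)} into the defining formula for $\marcop_{\pmb{\varphi}}$ and use that $F_Z$ is monotone increasing so $\min_i F_Z(x_i)=F_Z(\min_i x_i)$; this reduces $\marcop_{\pmb{\varphi}}(G_1(x_1),\ldots,G_n(x_n))$ to $F_1(x_1)\cdots F_n(x_n)F_Z(\min_i x_i)=H(x_1,\ldots,x_n)$. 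Performing the analogous substitutions with the underlined and overlined data, and comparing with \eqref{eq:H6.1}, yields the remaining two identities in \emph{(vi)}.

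The crux is \emph{(ii)}. The naive bound based on the form $\prod_i\varphi_i(u_i)\min_i\{u_i/\varphi_i(u_i)\}$ is unhelpful, because increasing a $\varphi_i$ enlarges the product but shrinks the ratio $u_i/\varphi_i(u_i)$ inside the min. The key rewriting is
\[
\marcop_{\pmb{\varphi}}(u_1,\ldots,u_n)=\min_{i=1,\ldots,n}\Bigl\{u_i\prod_{j\neq i}\varphi_j(u_j)\Bigr\},
\]
valid whenever all $\varphi_i(u_i)>0$ (and zero otherwise). In this form, if $\pmb{\varphi}\leqslant\pmb{\varphi}'$ componentwise, each of the $n$ quantities $u_i\prod_{j\neq i}\varphi_j(u_j)$ is nondecreasing in the tuple $\pmb{\varphi}$ because it involves only $\varphi_j$ with $j\neq i$; hence their minimum is nondecreasing, proving $\marcop_{\pmb{\varphi}}\leqslant\marcop_{\pmb{\varphi}'}$. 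Applying this with the ordered pairs $(\underline{\pmb{\varphi}},\pmb{\varphi})$ and $(\pmb{\varphi},\overline{\pmb{\varphi}})$ from \emph{(i)} delivers \emph{(ii)}. Finally, \emph{(v)} is an immediate corollary of \emph{(vi)} together with the definition of $\underline{H}$ and $\overline{H}$ as the pointwise infimum and supremum of $H$ over admissible $F_i\in\mathcal{F}_{(\underline{F}_i,\overline{F}_i)}$, so no additional argument is needed.
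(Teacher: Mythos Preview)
Your proposal is correct and largely parallels the paper's own proof, which is very terse (most items are dismissed in a single clause). Two points are worth flagging.

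First, for \emph{(ii)} the paper simply writes ``follows from \emph{(i)} and Equation~\eqref{minn}''; your explicit rewriting
\[
\marcop_{\pmb{\varphi}}(u_1,\ldots,u_n)=\min_{i}\Bigl\{u_i\prod_{j\neq i}\varphi_j(u_j)\Bigr\}
\]
is exactly the missing computation behind that sentence, and is the right way to see why monotonicity in the generator tuple holds despite the competing effects you describe.

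Second, your route to \emph{(v)} differs from the paper's. The paper argues in two steps: first $\marcop_{\underline{\pmb{\varphi}}}(\underline{G}_1,\ldots,\underline{G}_n)\leqslant \marcop_{\underline{\pmb{\varphi}}}(G_1,\ldots,G_n)$ by monotonicity of a copula in its \emph{arguments}, then $\marcop_{\underline{\pmb{\varphi}}}(G_1,\ldots,G_n)\leqslant \marcop_{\pmb{\varphi}}(G_1,\ldots,G_n)$ by \emph{(ii)}. You instead deduce \emph{(v)} directly from \emph{(vi)} together with $\underline{H}\leqslant H\leqslant \overline{H}$. Both are valid; yours is shorter and avoids invoking argument-monotonicity, at the cost of relying on the explicit formula~\eqref{eq:H6.1} for $\underline{H}$ and $\overline{H}$ (which the paper also uses for \emph{(vi)}).

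One small inaccuracy: you describe $\underline{\varphi}_i$ and $\overline{\varphi}_i$ as being constructed via~\eqref{eq-phi-ext}, but in the paper's setup they are defined as the \emph{minimal} and \emph{maximal} generators associated to the extreme triples (not necessarily the \eqref{eq-phi-ext} ones). This does not break your arguments---\emph{(iv)} holds by definition of minimal/maximal generators, and for \emph{(i)} one combines Lemma~\ref{lem-phi-order} with minimality/maximality---but you should adjust the wording accordingly.
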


\begin{proof}
  \emph{(i)} follows by the observations above, \emph{(ii)} follows from \emph{(i)} and Equation \eqref{minn}. \emph{(iii)} and \emph{(iv)} follow by definition. \emph{(v)}
  First,
  \[
  \marcop_{\underline{\pmb{\varphi}}} (\underline{G}_1(x_1),\ldots, \underline{G}_n(x_n)) \leqslant \marcop_{\underline{\pmb{\varphi}}}({G}_1(x_1),\ldots, {G}_n(x_n))
  \]
  since $\underline{G}_i=\low F_iF_Z\leqslant F_iF_Z=G_i$ and $\marcop_{\underline{\pmb{\varphi}}}$ is monotone. Next, by \emph{(ii)} we get
  $$\marcop_{\underline{\pmb{\varphi}}}({G}_1(x_1),\ldots, {G}_n(x_n))\leqslant \marcop_{{\pmb{\varphi}}} ({G}_1(x_1),\ldots, {G}_n(x_n)).$$
  The other side of the inequality goes similarly. Point \emph{(vi)} follows from Equations \eqref{eq:sklarn} and \eqref{eq:H6.1} and point \emph{(vii)} follows from {\emph{(iv)}}.
\end{proof}

\vskip.5cm

\section{Multivariate imprecise maxmin copulas}\label{subsec:imp:maxmin}

In this section we extend the bivariate imprecise maxmin copulas described in Subsection \ref{subsec:imp:marmm} to the multivariate case.
Start again by revisiting the precise case (see \cite{DuOmOrRu}).  We let $X_1,\ldots,X_p,X_{p+1}, \ldots,X_n,Z$
be independent variables with respective distribution functions $F_1,\ldots,F_p,$ $F_{p+1},\ldots,F_n,F_Z$ and define
\begin{equation}\label{eq:rmm:0}
  \begin{split}
     U_i & =\max\{X_i,Z\},\ \mbox{for}\ i=1,\ldots,p,\ \ \mbox{and} \\
     U_j & =\min\{X_j,Z\},\ \mbox{for}\ j=p+1,\ldots,n.
  \end{split}
\end{equation}
So, for the respective distribution functions $G_1, \ldots, G_n$ of $U_1, \ldots, U_n$ we have
\[
    \begin{split}
       G_i & =F_iF_Z\quad\mbox{for}\quad i=1,\ldots,p,\ \ \mbox{and} \\
       \widehat{G}_j & =\widehat{F}_j\widehat{F}_Z\quad\mbox{for}\quad j=p+1,\ldots,n.
    \end{split}
\]
Following the elaboration of the bivariate case presented in Subsection \ref{subsec:maxmin} (and in particular Proposition \ref{prop-maxmin-properties}) we deduce that this time the defining relations of the generating functions, necessarily satisfying Conditions (F1)--(F3), are given by
\[
    \begin{alignedat}{2}
       \varphi_i(G_i)&=F_i \quad \mbox{if}\quad G_i>0\quad \mbox{for}\quad &i=1,\ldots,p,\ \mbox{and} \\
       \chi_j({G}_j)&=F_j \quad\mbox{if}\quad G_j<1\quad \mbox{for} &j=p+1,\ldots,n.
    \end{alignedat}
\]
For a vector $(x_1,\ldots,x_n)\in\mathds{R}^n$ define
\[
    H(x_1,\ldots,x_n)=P(U_1\leqslant x_1,\ldots,U_n\leqslant x_n)
\]
and exploit Formula (4.3) of \cite{DuOmOrRu} to get for the independent case
\begin{equation}\label{eq:orazem}
  \begin{split}
     H(x_1,\ldots,x_n) &=\sum_{K\subseteq S}\prod_{i\in T\cup (S\setminus K)}F_i(x_i)\times \\
       & \max\left\{0,F_Z\left(\min_{i\in T\cup K} x_i \right) -F_Z\left(\max_{j\in S\setminus K} x_j \right)\right\},
  \end{split}
\end{equation}
where $T=\{1,2,\ldots,p\}$, $S=\{p+1,p+2,\ldots,n\}$, and $K$ runs through all the subsets of $S$. If we want to do the imprecise case, we need to have a deeper understanding of this formula. We first write $H(x_1,\ldots,x_n)=P(A)$ where $A= \bigcap_{i=1}^{n}A_i$ and $A_i=(U_i\leqslant x_i)$, for $i\in T\cup S$. Moreover, for $j\in S$ define $B_j= A_j^\textup{c}=\Omega\setminus A_j$ so that $B_j=(U_j>x_j)$. It follows that
\[
    A=\left(\bigcap_{i\in T}A_i\right)\cap\left(\bigcup_{j\in S}B_j\right)^\textup{c},
\]
and by the inclusion-exclusion principle
\[
  \begin{split}
     P(A) & =P\left(\bigcap_{i\in T}A_i\right)-P\left(\left(\bigcap_{i\in T}A_i\right) \cap \left( \bigcup_{j\in S} B_j\right)\right) \\
       & =P\left(\bigcap_{i\in T}A_i\right)-\sum_{\emptyset\neq K\subseteq S} (-1)^{|K|+1} P\left(\left(\bigcap_{i\in T}A_i\right) \cap \left( \bigcap_{j\in K}B_j\right)\right).
  \end{split}
\]
Observe that $A_i=(\max\{X_i,Z\}\leqslant x_i)=(X_i\leqslant x_i)\cap(Z\leqslant x_i)$ for $i\in T$ and that $B_j=(\min\{X_j,Z\}> x_j)=(X_j> x_j)\cap(Z> x_j)$ for $j\in S$, so that, using the independence assumption
\[
  \begin{split}
     P(A)
       & =\prod_{i\in T}F_i(x_i)P\left(Z\leqslant\min_{i\in T} x_i\right)\left(1-\sum_{\emptyset\neq K\subseteq S} (-1)^{|K|+1} P\left( \bigcap_{j\in K}B_j\right)\right) \\
			 & =\prod_{i\in T}F_i(x_i)P\left(Z\leqslant\min_{i\in T} x_i\right)\left(\sum_{K\subseteq S} (-1)^{|K|} P\left( \bigcap_{j\in K}B_j\right)\right) \\
       & =\prod_{i\in T}F_i(x_i)\left(\sum_{K\subseteq S} (-1)^{|K|} \prod_{j\in K} \widehat{F}_j(x_j)
       P\left(\max_{j\in K} x_j <Z\leqslant\min_{i\in T} x_i \right) \right).
  \end{split}
\]
By reordering variables $X_{p+1},\ldots,X_n$, if necessary, let us order the members of the set $\{x_{p+1},\ldots,x_n\}$ so that $x_{p+1} \leqslant x_{p+2}\leqslant \ldots\leqslant x_{n}$. We also introduce $y= \min_{i\in T} x_i$ and choose $\alpha$ to be the largest index with $p <\alpha\leqslant n$ and $x_{\alpha}\leqslant y$. In case that $y<x_{p+1}$ we choose $\alpha=p$. For a $K\subseteq S, K\neq\emptyset,$ denote by $r$ the greatest index contained in $K$ and let $K'\subseteq\{p+1,\ldots,r-1\}$ be such that $K=K'\cup\{r\}$. If $\alpha<r$ then the term of the above sum corresponding to $K$ is zero, so that we can keep  only those terms for which $\alpha\geqslant r$. Therefore,
\[
     P(A)
        =\prod_{i\in T}F_i(x_i) \left( P(Z\leqslant y)+
        \sum_{r=p+1}^{\alpha}E_r \widehat{F}_r(x_r)
       P\left( x_r <Z\leqslant y \right) \right),
\]
where $E_r=1$ if $r=p+1$, and $E_r$ stands for the expression
\[
    E_r=   \sum_{K'\subseteq \{p+1,\ldots,r-1\}} (-1)^{|K'|+1} \prod_{j\in K'} \widehat{F}_j(x_j),
\]
otherwise. Clearly, this is an alternating sum of the elementary symmetric polynomials which is known to be equal to
\[
    E_r=-\prod_{j=p+1}^{r-1}(1- \widehat{F}_j(x_j)) =-\prod_{j=p+1}^{r-1}F_j(x_j),
\]
so that
\[
  \begin{split}
     P(A)
        &=\prod_{i\in T}F_i(x_i) \left( P(Z\leqslant y)-
        \sum_{r=p+1}^{\alpha}\prod_{j=p+1}^{r-1}F_j(x_j) (1-{F}_r(x_r))
       P\left( x_r <Z\leqslant y \right) \right)\\
       &=\prod_{i\in T}F_i(x_i) \left( P(Z\leqslant x_{p+1})+
        \sum_{r=p+1}^{\alpha-1}\prod_{j=p+1}^{r}F_j(x_j)
       P\left( x_r <Z\leqslant x_{r+1} \right) \right.\\
       &\phantom{.}\quad\quad\quad\quad\left.+\prod_{j=p}^{\alpha}F_j(x_j)P\left( x_\alpha <Z\leqslant y \right)\right);
  \end{split}
\]
in order to get the sum in the second expression, we subtract the subtrahend of a certain term from the minuend of the previous term of the sum in the first expression, going through all terms. We now rewrite this sum and compute further
\[
  \begin{split}
     P(A)
       &=\prod_{i\in T}F_i(x_i) \left(F_Z(x_p)+
        \sum_{r=p}^{\alpha}\prod_{j=p}^{r}F_j(x_j)
        \right.\times\\
       &\phantom{.}\quad\quad\quad\quad\times\left. \left(F_Z\left( \min_{j\in T\cup\{r+1,\ldots,n\}}x_j \right)-F_Z\left( \max_{j\in \{1,\ldots,r\}}x_j \right)\right)\right)\\
       &=\prod_{i\in T}F_i(x_i) \left(
        \sum_{K\subseteq S}\prod_{j\in K}F_j(x_j) \max\left\{0,F_Z\left( \min_{j\in T\cup S\setminus K}x_j \right)-F_Z\left( \max_{j\in K}x_j \right)\right\}
        \right).
  \end{split}
\]
Now, the last sum above is written independently of the ordering of the components $x_j$ and this brings us immediately to the desired formula \eqref{eq:orazem}.

We now introduce the auxiliary generating functions. Here we are facing the dilemma that two ways of defining $\varphi^*$ have been used in the literature. To avoid confusion we introduce a new notation
\[\begin{split}
     \varphi^\dag_i(u_i)=\frac{u_i}{\varphi_i(u_i)}\quad \mbox{for}\quad &i=1,\ldots,p, u_i>0\ \mbox{and} \\
     \varphi^\dag_j(u_j)=\frac{u_j-\chi_j(u_j)}{1-\chi_j(u_j)}  \quad \mbox{for}\quad &j=p+1,\ldots,n, u_j<1.
  \end{split}
\]
With this notation of $\varphi^\dag$ we follow the definition of $\varphi^*$ and $\chi_*$ of \cite{DuOmOrRu} and \cite{KoBuKoMoOm2}, while the according definitions of these auxiliary functions were somewhat different in \cite{OmRu}. Next we define
\[
    \pmb{\varphi}=(\varphi_1,\ldots,\varphi_p,\chi_{p+1},\ldots,\chi_{n})\quad \mbox{and}\quad \pmb{\varphi}^\dag=(\varphi_1^\dag,\ldots,\varphi_{n}^\dag).
\]
Formula \eqref{eq:orazem} now yields (cf.\ also \cite[(4.4)]{DuOmOrRu})
\begin{equation}\label{eq:orazem'}
  \begin{split}
     \mmcop_{\pmb{\varphi}}(\mathbf{u}) &=\left(\prod_{i\in T}\varphi_i(u_i) \right)\times \\ \sum_{K\subseteq S}\prod_{j\in S\setminus K}\chi_j(u_j)       &
     \max\left\{0,\min_{i\in T\cup K} \varphi_i^\dag(u_i) -\max_{j\in S\setminus K} \varphi_j^\dag(u_j) \right\},
  \end{split}
\end{equation}
and $H(x_1,\ldots,x_n)=\mmcop_{\pmb{\varphi}}(G_1(x_1),\ldots,G_n(x_n))$.

In the imprecise setting we work in a finitely-additive probability space. Endogenous shocks are given by random variables whose distributions $F_i$ belong to $\mathcal{F}_{(\underline{F}_i, \overline{F}_i)}$. We define the generating functions using the ideas of Equations \eqref{eq-phi-ext} and \eqref{eq-chi-ext} so that, in particular, they do suffice the above defining relations. In addition, by Lemmas \ref{lem-phi-order} and \ref{lem-chi-order} we deduce that
\[
    \begin{alignedat}{4}
       F_i'&\leqslant F_{i}\ \hbox{implies}\ \varphi'_i &\leqslant \varphi_i\ \quad \mbox{for}\ \ &i=1,\ldots,p,\quad\quad \mbox{and} \\
       F_j'&\leqslant F_{j}\ \hbox{implies}\ \chi'_j &\leqslant \chi_j\ \quad \mbox{for}\ &j=p+1, \ldots,n.\phantom{\mbox{and}}
    \end{alignedat}
\]
As in Section \ref{subsec:imp:marsh} we introduce the minimal and the maximal joint distribution functions
\[\begin{split}
     {\low H} & =\min\{ H(x_1,\ldots, x_n)~|~F_i\in \mathcal{F}_{(\underline{F}_i, \overline{F}_i)}~\mbox{for}~i=1,2,\ldots, n\}\ \mbox{and} \\
     {\up H} & =\max\{ H(x_1,\ldots, x_n)~|~F_i\in \mathcal{F}_{(\underline{F}_i, \overline{F}_i)}~\mbox{for}~i=1,2,\ldots, n\}.
  \end{split}
\]

Given two vectors of functions $\underline{\pmb{\varphi}}\leqslant \overline{\pmb{\varphi}}$ such that each of their components satisfies Conditions (F1)--(F3), we let
\[
    \mmsetcop
    =\{\mmcop_{\pmb{\varphi}}\colon \underline{\pmb{\varphi}}\leqslant \pmb{\varphi} \leqslant \overline{\pmb{\varphi}}\},
\]
where each component of $\pmb{\varphi}$ also satisfies Conditions (F1)--(F3), and call this set of copulas an \emph{$n$-variate imprecise maxmin (MM for short) copula}. In Condition (F3) we apply the respective definitions of $\varphi_i^\dag$ and $\varphi_j^\dag$ given above for $i=1,\ldots,p,$ and for $j=p+1, \ldots,n$ instead of starred functions of Subsection \ref{subsec:maxmin}.

We continue to use the ideas of Subsection \ref{subsec:maxmin} by letting $\low \varphi_i$ and $\low \chi_j$ respectively $\up{\varphi}_i$ and $\up{\chi}_j$ be the minimal respectively the maximal function satisfying Conditions (F1)--(F3) and
\[
    \begin{alignedat}{3}
       \low \varphi_i(\low G_i) &=\low F_i \  \mbox{if}\  \low G_i>0 \ \mbox{resp.}\ \up \varphi_i(\up G_i) &=\up F_i \  \mbox{if}\  \overline{G}_i>0\ \mbox{for}\ i&=1,\ldots,p,\quad \mbox{and} \\
       \low \chi_j({\low G}_j) &={\low F}_j \  \mbox{if}\ \low G_j<1\ \mbox{resp.}\ \up \chi_j({\up G}_j) &={\up F}_j \  \mbox{if}\  \up{G}_j<1\ \mbox{for}\ j&=p+1,\ldots,n.
    \end{alignedat}
\]
Let us summarize.
\vskip2mm

\begin{theorem}[Properties of multivariate imprecise MM copulas]\label{thm:maxmin multi}\label{thm:11}
	In the situation described above we have:
  \begin{enumerate}[(i)]
    \item $\underline{\pmb{\varphi}} \leqslant \overline{\pmb{\varphi}}$.
    \item \[
    \begin{alignedat}{3}
       G_i & =F_iF_Z,\quad \underline{G}_i&=\underline{F}_iF_Z,\quad \overline{G}_i&=\overline{F}_iF_Z,\quad\mbox{for}\quad i=1,\ldots,p,\ \ \mbox{and} \\
       \widehat{G}_j & =\widehat{F}_j\widehat{F}_Z,\quad \widehat{\low G}_j & =\widehat{\low F}_j\widehat{F}_Z,\quad \widehat{\up G}_j & =\widehat{\up F}_j \widehat{F}_Z,\quad\mbox{for}\quad j=p+1,\ldots,n.
    \end{alignedat}
    \]
    \item \[
    \begin{split}
       \low G_i\leqslant G_i\leqslant \up G_i & \quad\mbox{for}\quad i=1,\ldots,p,\ \ \mbox{and} \\
       {\low G}_j\leqslant {G}_j\leqslant {\up G}_j & \quad\mbox{for}\quad j=p+1,\ldots,n.
    \end{split}
    \]
    \item \[
    \begin{alignedat}{3}
       \low \varphi_i(\low G_i) &=\low F_i \  \mbox{if}\  \low G_i>0 \ \mbox{resp.}\ \up \varphi_i(\up G_i) &=\up F_i \  \mbox{if}\  \overline{G}_i>0\ \mbox{for}\ i&=1,\ldots,p,\quad \mbox{and} \\
       \low \chi_j({\low G}_j) &={\low F}_j \  \mbox{if}\ \low G_j<1\ \mbox{resp.}\ \up \chi_j({\up G}_j) &={\up F}_j \  \mbox{if}\  \up{G}_j<1\ \mbox{for}\ j&=p+1,\ldots,n.
    \end{alignedat}
    \]
    \item $\varphi^\dag_i(G_i)=\up \varphi^\dag_j(\up G_j)=\low \varphi^\dag_k( \low G_k)=F_Z$ for all $i,j,k=1,\ldots,n$.
    \item \[
        H(x_1, \ldots,x_n)=\mmcop_{\pmb{\varphi}}(G_1(x_1),\ldots, {G}_n(x_n)).
        \]
    \item \[\begin{split}
                 & \mmcop_{\low{{\pmb{\varphi}}}}(\low{G}_1(x_1),\ldots, {\low G}_n(x_n)) \\
               \leqslant\  & \mmcop_{{{\pmb{\varphi}}}} (G_1(x_1),\ldots,{{G} }_n(x_n)) \\
               \leqslant\  & \mmcop_{\up{{\pmb{\varphi}}}} (\up G_1(x_1),\ldots,{\up{G} }_n(x_n)).
            \end{split}
    \]
    \item \[ \begin{split}
                \low H(x_1, \ldots,x_n) & =\mmcop_{\low{{\pmb{\varphi}}}}(\low{G}_1(x_1),\ldots, {\low G}_n(x_n)),                \\
                \up H(x_1, \ldots,x_n) & =\mmcop_{\up{{\pmb{\varphi}}}} (\up G_1(x_1),\ldots,{\up{G} }_n(x_n)).
             \end{split}
        \]
  \end{enumerate}
\end{theorem}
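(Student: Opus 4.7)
The plan is to prove the statements in the given order, treating (i)--(vi) as relatively direct consequences of the setup and concentrating the real work on (vii)--(viii). Part (i) follows by applying Lemmas~\ref{lem-phi-order} and \ref{lem-chi-order} componentwise to the bounds $\low F_i\leqslant \up F_i$, noting that the extensions \eqref{eq-phi-ext} and \eqref{eq-chi-ext} furnish the relevant generators. Parts (ii) and (iv) merely record the defining identities $G_i=F_iF_Z$, $\widehat{G}_j=\widehat{F}_j\widehat{F}_Z$ and $\varphi_i(G_i)=F_i$, $\chi_j(G_j)=F_j$ for the various triples; they are available for $(\low F_i,F_Z)$ and $(\up F_i,F_Z)$ by the construction of $\low{\pmb\varphi}$ and $\up{\pmb\varphi}$. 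Part (iii) holds because $F_Z\geqslant 0$ is precise: multiplying $\low F_i\leqslant F_i\leqslant \up F_i$ by $F_Z$ gives the $T$-coordinate inequalities, and multiplying $\widehat{\up F}_j\leqslant \widehat F_j\leqslant \widehat{\low F}_j$ by $\widehat F_Z\geqslant 0$ gives the $S$-coordinate ones. For (v) one computes, for $i\in T$, $\varphi_i^\dag(G_i)=G_i/\varphi_i(G_i)=F_iF_Z/F_i=F_Z$, and for $j\in S$, $\varphi_j^\dag(G_j)=(G_j-\chi_j(G_j))/(1-\chi_j(G_j))=F_Z\widehat F_j/\widehat F_j=F_Z$ using $G_j-F_j=F_Z\widehat F_j$; the same identities hold for the bound generators evaluated at $\low G_k$ and $\up G_k$. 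Part (vi) is contained in the derivation of \eqref{eq:orazem'} that immediately precedes the theorem and requires no further argument.

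For (vii) and (viii), I would first observe that once (viii) is proved, (vii) is an immediate consequence of the pointwise inequalities $\low H\leqslant H\leqslant \up H$ together with (vi). To establish (viii) I would exploit the manifestly non-negative expression \eqref{eq:orazem} and show that, for each fixed point $(x_1,\ldots,x_n)$ and fixed precise $F_Z$, the value $H(x_1,\ldots,x_n)$ is affine and componentwise \emph{non-decreasing} in every marginal $F_i$: for $i\in T$, $F_i$ is a factor of every summand; for $j\in S$, $F_j$ is a factor of the summands indexed by $K$ with $j\in S\setminus K$ and is absent from the remaining ones; and in both cases the $\max\{0,\cdot\}$ factor is non-negative. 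Hence $\low H$ is attained simultaneously at $F_i=\low F_i$ for all $i$, and $\up H$ simultaneously at $F_i=\up F_i$ for all $i$. Combining this with (vi) applied to the extreme triples $(\low F_1,\ldots,\low F_n,F_Z)$ and $(\up F_1,\ldots,\up F_n,F_Z)$, which by (ii)--(iv) send $G_k,\pmb\varphi$ to $\low G_k,\low{\pmb\varphi}$ and to $\up G_k,\up{\pmb\varphi}$ respectively, yields precisely the identities claimed in (viii).

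The obstacle I expect to dominate the argument is the monotonicity of $H$ in each $F_j$ for $j\in S$. The inclusion--exclusion form of $H$ that arises naturally from the event decomposition carries the alternating signs $(-1)^{|K|}$, which obscure the direction of monotonicity; only after collapsing the alternating sum into the non-negative form \eqref{eq:orazem} does the correct direction become transparent. The min-type coordinates are the genuine source of this subtlety and, as in the bivariate reflected maxmin setting of Section~\ref{sec:reflected-bi}, they must be handled by passing to survival functions (using $\widehat F_k\geqslant \widehat F_k'$ whenever $F_k\leqslant F_k'$) to keep all inequalities pointing in the expected direction.
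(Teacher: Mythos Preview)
Your proposal is correct and very close in spirit to the paper's proof, but with one organizational difference worth noting. The paper establishes (vii) \emph{first} by a direct computation: it expands $\mmcop_{\low{\pmb\varphi}}(\low G_1,\ldots,\low G_n)$ via formula \eqref{eq:orazem'}, invokes (v) to replace each $\low\varphi_i^\dag(\low G_i(x_i))$ by $F_Z(x_i)$, obtains precisely the non-negative sum \eqref{eq:orazem} with $\low F_i$ in place of $F_i$, and then compares term by term with the same sum for $F_i$. Point (viii) is then deduced from (vi) and (vii). You run the logic in the opposite direction: you argue monotonicity of $H$ in each $F_i$ directly from \eqref{eq:orazem} to get (viii), and then read off (vii) from (viii), (vi), and $\low H\leqslant H\leqslant \up H$. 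Both routes rest on the same key observation---that in the non-negative representation \eqref{eq:orazem} each $F_i$ enters only as a multiplicative factor of non-negative terms---so the underlying computation is identical; you simply package it as ``$H$ is coordinatewise non-decreasing in the $F_i$'' rather than as a pointwise comparison of two instances of \eqref{eq:orazem'}. Your remark about needing to collapse the inclusion--exclusion alternating sum before the $S$-coordinate monotonicity becomes visible is exactly the role played by the derivation preceding \eqref{eq:orazem} in the paper.
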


\begin{proof}
  Points from \emph{(i)} to \emph{(iv)} follow by the observations above. \emph{(v)}: Let us compute only one of the three cases that go in a similar way:
  \[
    \varphi_i^\dag(G_i)=\frac{G_i}{\varphi_i (G_i)}=\frac{ G_i}{ F_i}= {F_Z},\ \mbox{for}\ i=1,\ldots,p,
  \]
  and
  \[
    \varphi_j^\dag(G_j)=\frac{ G_j-\chi_j(G_j)}{1-\chi_j(G_j)}= \frac{ G_j-F_j}{1-F_j}={F}_Z,\ \mbox{for}\ j=p+1,\ldots,n.
  \]
  Point \emph{(vi)} amounts to the same as Equation \eqref{eq:orazem'}. To get \emph{(vii)} observe that
  \[
    \mmcop_{\low{{\pmb{\varphi}}}}(\low{G}_1(x_1),\ldots, {\low G}_n(x_n))=\]\[\sum_{K\subseteq S}\prod_{i\in T\cup (S\setminus K)}\low F_i(x_i)
     \max\left\{0,\min_{i\in T\cup K} \low\varphi_i^\dag(\low G_i(x_i)) -\max_{j\in S\setminus K} \low\varphi_j^\dag(\low G_j(x_j))\right\}= \]
  \[
    \sum_{K\subseteq S}\prod_{i\in T\cup (S\setminus K)}\low F_i(x_i)
     \max\left\{0,\min_{i\in T\cup K} F_Z(x_i) -\max_{j\in S\setminus K} F_Z(x_j)\right\}\leqslant\]
  \[
    \sum_{K\subseteq S}\prod_{i\in T\cup (S\setminus K)}F_i(x_i)
     \max\left\{0,\min_{i\in T\cup K} F_Z(x_i) -\max_{j\in S\setminus K} F_Z(x_j)\right\}=\mmcop_{{{\pmb{\varphi}}}} (G_1(x_1),\ldots,{{G} }_n(x_n)).\]
and the first desired inequality follows. Considerations of the same kind yield the second one. Finally, Point \emph{(viii)} follows from Points \emph{(vi)} and \emph{(vii)}.
\end{proof}

\vskip.5cm

\section{Multivariate imprecise RMM copulas}\label{subsec:imp:rmm}

Finally, we extend the imprecise reflected maxmin copulas from Section \ref{sec:reflected-bi} to the multivariate case as well.
For the third time we start by revisiting the precise case.
As in Section \ref{subsec:imp:maxmin} we let \[X_1,\ldots,X_p,X_{p+1}, \ldots,X_n,Z\] be independent variables with respective distribution functions \[F_1,\ldots,F_p,F_{p+1},\ldots,F_n,F_Z\] and define
\begin{equation}\label{eq:rmm:0}
  \begin{split}
     U_i & =\max\{X_i,Z\},\ \mbox{for}\ i=1,\ldots,p,\ \ \mbox{and} \\
     U_j & =\min\{X_j,Z\},\ \mbox{for}\ j=p+1,\ldots,n.
  \end{split}
\end{equation}
So, for the respective distribution functions $G_1, \ldots, G_n$ of $U_1, \ldots, U_n$ we have again
\[
    \begin{split}
       G_i & =F_iF_Z\quad\mbox{for}\quad i=1,\ldots,p,\ \ \mbox{and} \\
       \widehat{G}_j & =\widehat{F}_j\widehat{F}_Z\quad\mbox{for}\quad j=p+1,\ldots,n.
    \end{split}
\]
Following the notation of the bivariate case presented in Subsection \ref{subsec:reflected} we determine that the generating functions should suffice
\[
    \begin{alignedat}{3}
       \varphi_i(G_i) & =F_i,\quad f_i(G_i) &=F_i\widehat{F}_Z \quad \mbox{if}\quad G_i>0\quad \mbox{for}\quad i&=1,\ldots,p,\quad \mbox{and} \\
       \chi_j(G_j) & =F_j,\quad f_j(\widehat{G}_j) &=\widehat{F}_jF_Z \quad \mbox{if}\quad G_j<1\quad \mbox{for}\quad j&=p+1,\ldots,n.
    \end{alignedat}
\]
Following the notation of the bivariate case write $H^\sigma(x_1, \ldots,x_n)$ for the joint distribution function of the random vector $(U_1,U_2,\ldots,U_n)$ in which the last $n-p$ entries are reflected, i.e.,
\[
    H^\sigma(x_1, \ldots,x_n)=P(U_1\leqslant x_1,\ldots,U_p\leqslant x_p, U_{p+1}>x_{p+1},\ldots,U_n>x_n).
\]
Recall \cite[Theorem 14]{KoBuKoMoOm2} to get
\begin{equation}\label{eq:rmm:1}
    H^\sigma(x_1, \ldots,x_n)=\rmmcop_\mathbf{f}(G_1(x_1),\ldots, G_p(x_p),\widehat{G}_{p+1}(x_{p+1}),\ldots,\widehat{G}_n(x_n)).
\end{equation}
There the authors introduced notation
\begin{equation}\label{eq:rmm:2}
  \rmmcop_\mathbf{f}(\mathbf{u})=
\max \left\{0, \min_{\substack{i\in \{1,\ldots,p\} \\ j \in \{p+1,\ldots,n\}}} \left( \left(u_iu_j-f_i(u_i)f_j(u_j) \right)\prod_{\substack{l\in \{1,\ldots,n\} \\ l \neq i,j}} (u_l+{f}_l(u_l))\right) \right\},
\end{equation}
where $\mathbf{f}=(f_1,f_2,\ldots,f_n)$ and $\mathbf{u}= (u_1,u_2, \ldots, u_n)$.

In the imprecise setting we work in a finitely-additive probability space. Endogenous shocks are given by random variables whose distributions $F_i$ belong to $\mathcal{F}_{(\underline{F}_i, \overline{F}_i)}$. We define the generating functions using the ideas of Equations \eqref{eq-phi-ext} and \eqref{eq-chi-ext} so that, in particular, they do suffice the above defining relations. In addition, by Lemmas \ref{lem-phi-order} and \ref{lem-chi-order}, and by Equation \eqref{eq:rewrite} we deduce that
\[
    \begin{alignedat}{5}
       F_i'&\leqslant F_{i}\ \hbox{implies}\ \varphi'_i &\leqslant \varphi_i\ \mbox{and}\ f'_i &\leqslant f_i \quad \mbox{for}\ \ &i=1,\ldots,p,\quad\quad \mbox{and} \\
       F_j'&\leqslant F_{j}\ \hbox{implies}\ \chi'_j &\leqslant \chi_j\ \mbox{and}\ f_j &\leqslant f'_j \quad \mbox{for}\ &j=p+1, \ldots,n.\phantom{\mbox{and}}
    \end{alignedat}
\]
As in Sections \ref{subsec:imp:marsh} and \ref{subsec:imp:maxmin} we introduce the minimal and the maximal joint distribution functions
\[\begin{split}
     {\low H}^\sigma & =\min\{ H^\sigma(x_1,\ldots, x_n)~|~F_i\in \mathcal{F}_{(\underline{F}_i, \overline{F}_i)}~\mbox{for}~i=1,2,\ldots, n\}\ \mbox{and} \\
     {\up H}^\sigma  & =\max\{ H^\sigma(x_1,\ldots, x_n)~|~F_i\in \mathcal{F}_{(\underline{F}_i, \overline{F}_i)}~\mbox{for}~i=1,2,\ldots, n\}.
  \end{split}
\]

Given two vectors of functions $\underline{\mathbf{f}}\leqslant \overline{\mathbf{f}}$ such that each of their components satisfies Conditions (G1)--(G3), we let
\[
    \rmmsetcop
    =\{\rmmcop_{\mathbf{f}}\colon \underline{\mathbf{f}}\leqslant \mathbf{f} \leqslant \overline{\mathbf{f}}\},
\]
where each component of $\mathbf{f}$ also satisfies Conditions (G1)--(G3), and call this set of copulas an \emph{$n$-variate imprecise reflected maxmin (RMM for short) copula}. In Condition (G3) we apply the respective definitions of $f^*$ and $g^*$ given in Subsection \ref{subsec:reflected} in an obvious way to introduce $f_i^*$ and $f^*_j$ for $i=1,\ldots,p,$ and for $j=p+1, \ldots,n$.

We continue to use the ideas of Subsection \ref{subsec:reflected} by letting $\low f_i$ respectively $\up{f}_i$ be the minimal respectively the maximal function satisfying Conditions (G1)--(G3) and
\[
    \begin{alignedat}{3}
       \low f_i(\low G_i) &=\low F_i\widehat{F}_Z \  \mbox{if}\  \low G_i>0 \ \mbox{resp.}\ \up f_i(\up G_i) &=\up F_i\widehat{F}_Z \  \mbox{if}\  \overline{G}_i>0\ \mbox{for}\ i&=1,\ldots,p,\quad \mbox{and} \\
       \low f_j(\widehat{\up G}_j) &=\widehat{\up F}_jF_Z \  \mbox{if}\ \up G_j<1\ \mbox{resp.}\ \up f_j(\widehat{\low G}_j) &=\widehat{\low F}_jF_Z \  \mbox{if}\  \underline{G}_j<1\ \mbox{for}\ j&=p+1,\ldots,n.
    \end{alignedat}
\]
Let us summarize.
\vskip2mm

\begin{theorem}[Properties of multivariate imprecise RMM copulas]\label{thm:reflected multi}\label{thm:12}
	In the situation described above we have:
  \begin{enumerate}[(i)]
    \item $\underline{\mathbf{f}} \leqslant \overline{\mathbf{f}}$.
    \item For $\underline{\rmmcop_{{\mathbf{f}}}}=\inf\rmmsetcop$ and $\overline{\rmmcop_{{\mathbf{f}}}}=\sup\rmmsetcop$ we have that
    \[
    \begin{alignedat}{2}
    \underline{\rmmcop_{{\mathbf{f}}}}&=\bigwedge_{\widetilde{f}_i\in\{\low f_i,\up f_i\}}\rmmcop_{{\widetilde{\mathbf{f}}}}&=\bigwedge_{\substack{i\in \{1,\ldots,p\}  \\ j \in \{p+1,\ldots,n\}\\{\widetilde{f}_i=\up f_i,\widetilde{f}_j=\up f_j, \widetilde{f}_l=\low f_l,l\neq i,j}}} \rmmcop_{{\widetilde{\mathbf{f}}}}\\
    &\quad\quad\quad\quad\quad\mbox{and}\\
    \overline{\rmmcop_{{\mathbf{f}}}}&=\bigvee_{\widetilde{f}_i\in\{\low f_i,\up f_i\}}\rmmcop_{{\widetilde{\mathbf{f}}}}&=\bigvee_{\substack{i\in \{1,\ldots,p\}  \\ j \in \{p+1,\ldots,n\}\\{\widetilde{f}_i=\low f_i, \widetilde{f}_j=\low f_j, \widetilde{f}_l=\up f_l,l\neq i,j}}} \rmmcop_{{\widetilde{\mathbf{f}}}},
    \end{alignedat}
    \]
    where the infimum and supremum are attained pointwise.
    \item \[
    \begin{alignedat}{3}
       G_i & =F_iF_Z,\quad \underline{G}_i&=\underline{F}_iF_Z,\quad \overline{G}_i&=\overline{F}_iF_Z,\quad\mbox{for}\quad i=1,\ldots,p,\ \ \mbox{and} \\
       \widehat{G}_j & =\widehat{F}_j\widehat{F}_Z,\quad \widehat{\low G}_j & =\widehat{\low F}_j\widehat{F}_Z,\quad \widehat{\up G}_j & =\widehat{\up F}_j \widehat{F}_Z,\quad\mbox{for}\quad j=p+1,\ldots,n.
    \end{alignedat}
    \]
    \item \[
    \begin{split}
       \low G_i\leqslant G_i\leqslant \up G_i & \quad\mbox{for}\quad i=1,\ldots,p,\ \ \mbox{and} \\
       \widehat{\low G}_j\geqslant \widehat{G}_j\geqslant \widehat{\up G}_j & \quad\mbox{for}\quad j=p+1,\ldots,n.
    \end{split}
    \]
    \item \[
    \begin{alignedat}{4}
        \low f_i(\low G_i) &=\low F_i-\low G_i, \ \up f_i(\up G_i) &=\up F_i -\up G_i, \ \ \mbox{if}\ \low G_i,\up G_i>0\ \mbox{resp.}\ \mbox{for}\ i&=1,\ldots,p,\ \mbox{and} \\
       \low f_j(\widehat{\up G}_j) &=\widehat{\up F}_j-\widehat{\up G}_j, \ \up f_j(\widehat{\low G}_j)\ &=\widehat{\low F}_j-\widehat{\low G}_j, \ \mbox{if}\ \low G_j,\up G_j<1\ \mbox{resp.}\ \mbox{for}\ j&=p+1,\ldots,n.
    \end{alignedat}
    \]
    \item $\low f_i^*(\low G_i)\up f_j^*(\widehat{\low G}_j)=1,\quad$ $\low f_i^*(\low G_i)\low f_j^*(\widehat{\up G}_j)=1,\quad$ $\up f_i^*(\up G_i)\low f_j^* (\widehat{\up G}_j)=1,\quad$ $\up f_i^*(\up G_i)\up f_j^*(\widehat{\low G}_j)=1,$\quad for $i=1,\ldots,p$ and $j=p+1,\ldots,n$.
    \item \[
        H^\sigma(x_1, \ldots,x_n)=\rmmcop_\mathbf{f}(G_1(x_1),\ldots, G_p(x_p),\widehat{G}_{p+1}(x_{p+1}),\ldots,\widehat{G}_n(x_n)).
        \]
    \item \[\begin{split}
                 & \rmmcop_{\low{\mathbf{f}}}(\low{G}_1(x_1),\cdots, \low{G}_p(x_p), \widehat{\up G}_{p+1}(x_{p+1}) ,\ldots, \widehat{\up G}_n(x_n)) \\
               \leqslant\  & \rmmcop_{{\mathbf{f}}} (G_1(x_1),\ldots,G_p(x_p), \widehat{{G}}_{p+1}(x_{p+1}), \ldots,\widehat{{G} }_n(x_n)) \\
               \leqslant\  & \rmmcop_{\up{\mathbf{f}}} (\up G_1(x_1),\ldots,\up G_p(x_p), \widehat{\low{G}}_{p+1}(x_{p+1}), \ldots,\widehat{\low{G} }_n(x_n)).
            \end{split}
    \]
    \item \[ \begin{split}
                \low H^\sigma(x_1, \ldots,x_n) & =\rmmcop_{\low{\mathbf{f}}} (\low G_1(x_1),\ldots, \low G_p(x_p), \widehat{\up G}_{p+1}(x_{p+1}) ,\ldots, \widehat{\up G}_n(x_n)),                 \\
                \up H^\sigma(x_1, \ldots,x_n) & =\rmmcop_{\up{\mathbf{f}}} (\up G_1(x_1),\ldots,\up G_p(x_p), \widehat{\low{G}}_{p+1}(x_{p+1}), \ldots,\widehat{\low{G} }_n(x_n)).
             \end{split}
        \]
  \end{enumerate}
\end{theorem}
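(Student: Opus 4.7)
The plan is to dispatch the nine points in three waves: first the structural items (i), (iii)--(vii), which mostly translate facts already secured in Section~\ref{subsec:reflected} and in Lemmas~\ref{lem-phi-order}--\ref{lem-chi-order}; then the optimization claim (ii), which is the technical heart; and finally (viii)--(ix), which fall out as immediate corollaries.

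For (i), Lemmas~\ref{lem-phi-order}--\ref{lem-chi-order} together with the rewriting \eqref{eq:rewrite} transfer the $\varphi,\chi$ inequalities to $f$: the identity $f=\varphi-\mathrm{id}$ preserves order for $i\leqslant p$, while $g(x)=1-x-\chi(1-x)$ reverses it for $j>p$, and this reversal is absorbed by our convention that $\underline f_j$ is associated to $(\overline F_1,\ldots,\overline F_n,F_Z)$ and $\overline f_j$ to the lower triple. Parts (iii) and (iv) follow by plugging $U_i=\max\{X_i,Z\}$, $U_j=\min\{X_j,Z\}$ into the independence formulas and exploiting the monotonicity of $F\mapsto FF_Z$ and $\widehat F\mapsto\widehat F\widehat F_Z$. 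Part (v) is the identity $f_i(G_i)=F_i\widehat F_Z=F_i-G_i$ (and $f_j(\widehat G_j)=\widehat F_j-\widehat G_j$), instantiated at the four endpoint configurations. Part (vi) is then a one-line check, e.g.
\[
\underline f_i^*(\underline G_i)\,\overline f_j^*(\widehat{\underline G}_j)=\frac{\underline F_i\widehat F_Z}{\underline F_i F_Z}\cdot\frac{\widehat{\underline F}_j F_Z}{\widehat{\underline F}_j\widehat F_Z}=1,
\]
with the other three products handled identically. Part (vii) is Equation \eqref{eq:rmm:1} applied to the admissible triple at hand.

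The principal obstacle is (ii). Fixing $\mathbf{u}\in[0,1]^n$ and writing $\xi_l=f_l(u_l)$, we have $\rmmcop_{\mathbf{f}}(\mathbf{u})=\max\{0,\min_{(i,j)}T_{ij}(\boldsymbol\xi)\}$ with
\[
T_{ij}(\boldsymbol\xi)=(u_iu_j-\xi_i\xi_j)\prod_{l\neq i,j}(u_l+\xi_l),
\]
the pair $(i,j)$ ranging over $\{1,\ldots,p\}\times\{p+1,\ldots,n\}$, and $\boldsymbol\xi$ over the product box $\prod_l[\underline f_l(u_l),\overline f_l(u_l)]$. The key observation is that, on the region $T_{ij}\geqslant 0$, the term $T_{ij}$ is coordinatewise decreasing in $\xi_i,\xi_j$ and coordinatewise increasing in every $\xi_l$ with $l\neq i,j$. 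Given any $\boldsymbol\xi$, I would pick $(i_0,j_0)$ realising $\min_{(i,j)}T_{ij}(\boldsymbol\xi)$ and compare with the corner $\widetilde{\boldsymbol\xi}^{(i_0,j_0)}$ having $\xi_{i_0}=\overline f_{i_0}(u_{i_0})$, $\xi_{j_0}=\overline f_{j_0}(u_{j_0})$ and $\xi_l=\underline f_l(u_l)$ otherwise: if $T_{i_0j_0}(\boldsymbol\xi)>0$ the monotonicity forces $T_{i_0j_0}(\widetilde{\boldsymbol\xi}^{(i_0,j_0)})\leqslant T_{i_0j_0}(\boldsymbol\xi)$, whereas if $T_{i_0j_0}(\boldsymbol\xi)\leqslant 0$ the factor $u_{i_0}u_{j_0}-\overline f_{i_0}\overline f_{j_0}$ is already $\leqslant 0$ at the corner, so $T_{i_0j_0}(\widetilde{\boldsymbol\xi}^{(i_0,j_0)})\leqslant 0$ and $\rmmcop$ collapses to $0$ there. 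In either case $\rmmcop_{\widetilde{\mathbf{f}}^{(i_0,j_0)}}(\mathbf{u})\leqslant\rmmcop_{\mathbf{f}}(\mathbf{u})$, proving the second equality of (ii); the first equality then follows by sandwiching, and the supremum side is handled symmetrically with the corner $\xi_i=\underline f_i,\xi_j=\underline f_j,\xi_l=\overline f_l$.

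Part (viii) is the monotonicity granted by (ii) applied to $\underline{\mathbf{f}}\leqslant\mathbf{f}\leqslant\overline{\mathbf{f}}$ evaluated at $\mathbf{u}=(G_1(x_1),\ldots,G_p(x_p),\widehat G_{p+1}(x_{p+1}),\ldots,\widehat G_n(x_n))$, combined with the coordinatewise bounds from (iv). For (ix), I would expand $H^\sigma$ via an inclusion--exclusion of the form developed in Section~\ref{subsec:imp:maxmin} and in \eqref{eq:H}, read off its monotonic dependence on each marginal $F_i$, and identify the infimum as achieved by $(\underline F_1,\ldots,\underline F_p,\overline F_{p+1},\ldots,\overline F_n,F_Z)$ (and symmetrically for the supremum); re-expressing these extrema via (vii) yields the two copula identities.
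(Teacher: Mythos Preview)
Your treatment of parts (i)--(vii) and (ix) is fine and matches the paper's approach; your argument for (ii) is in fact more detailed than the paper's terse ``follows from Equation \eqref{eq:rmm:2} after a short computation''.

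The gap is in (viii). You write that it is ``the monotonicity granted by (ii) applied to $\underline{\mathbf{f}}\leqslant\mathbf{f}\leqslant\overline{\mathbf{f}}$ \ldots\ combined with the coordinatewise bounds from (iv)''. But (ii) gives no such monotonicity: your own analysis there shows that $\mathbf{f}\mapsto\rmmcop_{\mathbf{f}}(\mathbf{u})$ is \emph{not} increasing in $\mathbf{f}$ at a fixed $\mathbf{u}$; the pointwise infimum is attained at corners with two coordinates set to $\overline f$ and the rest to $\underline f$, not at $\underline{\mathbf{f}}$. (The bivariate version makes this explicit: $\rmmcop_{\underline f,\underline g}\geqslant\rmmcop_{\overline f,\overline g}$.) So you cannot pass from $\rmmcop_{\underline{\mathbf{f}}}$ to $\rmmcop_{\mathbf{f}}$ at any common argument, and splitting the inequality into a change-of-$\mathbf{f}$ step and a change-of-argument step via (iv) fails.

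What makes (viii) work is that the arguments are not arbitrary points of $[0,1]^n$ but the specific values $\underline G_i(x_i),\widehat{\overline G}_j(x_j)$, at which (v) lets you rewrite the factors of \eqref{eq:rmm:2} explicitly in terms of $\underline F_i,\widehat{\overline F}_j,F_Z$. The paper does exactly this: with
\[
A_{ij}=\underline G_i(x_i)\widehat{\overline G}_j(x_j)-\underline f_i(\underline G_i(x_i))\underline f_j(\widehat{\overline G}_j(x_j)),\quad
B_i=\prod_{l\leqslant p,\,l\neq i}\underline F_l(x_l),\quad
D_j=\prod_{l>p,\,l\neq j}\widehat{\overline F}_l(x_l),
\]
one computes $A_{ij}=\underline F_i(x_i)\,\widehat{\overline F}_j(x_j)\,(F_Z(x_i)-F_Z(x_j))$, which is $\leqslant F_i(x_i)\,\widehat F_j(x_j)\,(F_Z(x_i)-F_Z(x_j))$ whenever the latter is positive (and both sides are $\leqslant 0$ otherwise, so the outer $\max\{0,\cdot\}$ absorbs the comparison). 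The factors $B_i,D_j$ are handled the same way. Your route through (ii) cannot substitute for this direct computation.
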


\begin{proof}
  \emph{(i)} follows by the observations above, \emph{(ii)} follows from Equation \eqref{eq:rmm:2} after a short computation. \emph{(iii)}, \emph{(iv)} and \emph{(v)} are immediate from the above.
  \emph{(vi)} Let us compute only one of the four cases that go in a similar way:
  \[
    \low f_i^*(\low G_i)=\frac{\low f_i(\low G_i)}{\low G_i}=\frac{\low F_i-\low G_i}{\low G_i}=\frac{1}{F_Z}-1=\frac{\widehat{F}_Z}{F_Z},\ \mbox{for}\ i=1,\ldots,p,
  \]
  and
  \[
    \up f_j^*(\widehat{\low G}_j)=\frac{\up f_j(\widehat{\low G}_j)}{\widehat{\low G}_j}=\frac{\widehat{\low F}_j-\widehat{\low G}_j}{\widehat{\low G}_j}= \frac{1}{\widehat{F}_Z}-1=\frac{{F}_Z}{\widehat{F}_Z},\ \mbox{for}\ j=p+1,\ldots,n.
  \]
  Point \emph{(vii)} amounts to the same as Equation \eqref{eq:rmm:1}. To get \emph{(viii)} apply Equation \eqref{eq:rmm:2} to the formula $C= \rmmcop_{\low{\mathbf{f}}} (\low{G}_1(x_1),\cdots, \low{G}_p(x_p), \widehat{\up G}_{p+1}(x_{p+1}) ,\ldots, \widehat{\up G}_n(x_n))$ and write the expression under the min operator as a product of three factors
  \[
    A_{ij}=\low{G}_i(x_i)\widehat{\up G}_j(x_j)-\underline{f}_i (\low{G}_i(x_i)) \underline{f}_j(\widehat{\up G}_j(x_j)) \]
  \[
    B_i=\prod_{\substack{l=1 \\ l \neq i}}^p (\low{G}_l(x_l)+\low{f}_l(\low{G}_l(x_l))) \quad\mbox{and}\quad D_j=\prod_{\substack{l=p+1 \\ l \neq j}}^n (\widehat{\up G}_l(x_l)+\low{f}_l(\widehat{\up G}_l(x_l))),
  \]
  for $i=1,\ldots,p,$ and $j=p+1,\ldots,n$.
  A straightforward computation using \emph{(v)} yields $A_{ij}=
  \underline{F}_i(x_i)\widehat{\overline{F}}_j(x_j)(\widehat{F}_Z(x_j) -\widehat{F}_Z(x_i))$ so that
\[
  A_{ij}\leqslant {F}_i(x_i)\widehat{F}_j(x_j)(\widehat{F}_Z(x_j)-\widehat{F}_Z(x_i))
  = {G}_i(x_i)\widehat{G}_j(x_j)-f_i({G}_i(x_i)) f_j(\widehat{ G}_j(x_j)).
\]
In a similar way we get
\[
    B_i\leqslant\prod_{\substack{l=1 \\ l \neq i}}^p ({G}_l(x_l)+{f}_l({G}_l(x_l))) \quad\mbox{and}\quad D_j\leqslant\prod_{\substack{l=p+1 \\ l \neq j}}^n (\widehat{ G}_l(x_l)+ {f}_l(\widehat{G}_l(x_l))),
\]
and the first desired inequality follows. Considerations of the same kind yield the second one. Finally, Point \emph{(ix)} follows from Points \emph{(vii)} and \emph{(viii)}.
\end{proof}

\textbf{Remark.} Note that in Theorem \ref{thm:11}, there is no statement that is equivalent to Point \emph{(ii)} of Theorem \ref{thm:12}, because it appears that no such statement can be proven for multivariate imprecise MM copulas. Thus in a sense, the multivariate imprecise RMM copulas behave nicer from the point of view of pointwise order than multivariate imprecise MM copulas.

\section{Conclusion}

The uncertainty of the final outcome of the rules of modeling dependencies in the bivariate imprecise setting issues a warning that one should address this issue on the multivariate imprecise level with utmost caution. A view on this problem was presented in the last but one paragraph of our introduction. 
Therefore, the paper \cite{OmSk} was helpful to the specialists in the area giving two important examples of bivariate imprecise copulas, Marshall's and maxmin copulas; there, the background assumptions of the shock model inducing each of the two families of copulas were assumed imprecise thus leading to a naturally defined imprecise copulas that have many interesting additional properties including coherence.

There is an important fact, namely \cite[Theorem 4]{OmSt2}, saying that the copulas obtained via the Sklar's theorem from bivariate distributions on finitely additive probability spaces are the same as the ones obtained on the standard probability spaces. This means that whenever the controversy of the bivariate imprecise dependence is resolved, it will be resolved both for the standard and for the non-standard approach simultaneously.

This encourages us to present an investigation of three major multivariate cases of shock model induced copulas: the Marshall's, the maxmin, and the reflected maxmin copulas (RMM). We believe that the properties of these objects, no matter what they will be called in the end, will help further investigations in the area. Here are some quick findings of ours. In all the three cases the extreme values of the generators lead us to extreme values of the set of copulas and to extreme values of the set of distributions.
In the case of Marshall's copulas this correspondence is simple and expected, a direct extension of the bivariate case. The set of copulas (a possible candidate for an $n$-variate imprecise copula) is coherent (Theorem \ref{thm:marshall multi}\emph{(ii)}), the set of the corresponding joint distributions is coherent (Theorem \ref{thm:marshall multi}\emph{(v)}\&\emph{(vi)}) and the lower and upper bounds correspond to each other.

The maxmin copulas behave somewhat differently. The set of joint distributions is coherent (Theorem \ref{thm:11}\emph{(vii)}\&\emph{(viii)}); however, the question of coherence of the (possible candidate for an) $n$-variate imprecise copula is left open. In the bivariate case one is able to make this set coherent as well, although the extremes were not correspondent to the according extreme distributions. The case of RMM copulas is more involved but in some sense clearer. The set of joint distributions is coherent\footnote{Here we understand the term \emph{coherent} in the usual way, namely, it means that the value of the lower respectively upper bound can be approximated at any fixed point of the unit square by values of the copulas from the set; actually in our case this value is even attained.} (Theorem \ref{thm:12}\emph{(viii)}\&\emph{(ix)}) and we can express the lower and the upper bound of the (possible candidate for an) $n$-variate imprecise copula as a minimum, respectively maximum of a finite number of copulas that belong to a specific set (Theorem \ref{thm:12}\emph{(ii)}). The obtained bounds are quasi-copulas in general and the solution to the question of coherence of this set needs methods that are yet to be discovered.
(For the bivariate case the kind of methods were developed in \cite{OmSt1}.) Since the maxmin and RMM copulas are obtainable from each other through a number of reflections, it is possible that a similar conclusion as the one exhibited in Theorem \ref{thm:12}\emph{(ii)} exists for maxmin copulas as well, but in view of the last remark in the paper, this looks like a nontrivial task for further investigations.
\\

Consequently, our paper opens a number of questions left to the community of experts on imprecise copulas to solve. These are primarily tasks in the multivariate imprecise setting:
\begin{enumerate}
  \item How to define a $p$-box of multivariate joint distributions?
  \item What to adopt as an imprecise multivariate copula?
  \item One needs a Sklar type theorem connecting the two notions above.
  \item One needs to develop an $n$-variate coherence testing algorithm for a set of (quasi)copulas extended from the bivariate case (cf.\ \cite{OmSt1}).
  \item One needs to develop an $n$-variate coherence testing algorithm for a set of (quasi)distributions extended from the bivariate case (cf.\ \cite{OmSt2}).
\end{enumerate}

\end{document}